\definecolor{blauw}{RGB}{24,65,131}
\definecolor{oranje}{RGB}{232,78,15}
\newtheorem{thm}{Theorem}[section]
\newtheorem{lm}[thm]{Lemma}
\newtheorem{crl}[thm]{Corollary}
\newtheorem{prop}[thm]{Proposition}
\newtheorem{conj}[thm]{Conjecture}
\theoremstyle{definition}
\newtheorem{rmk}[thm]{Remark}
\newtheorem{df}[thm]{Definition}
\renewcommand{\phi}{\varphi}
\newcommand{\FF}{\mathbb F}
	\newcommand{\fq}{\FF_q}
	\newcommand{\fp}{\FF_p}
\renewcommand{\leq}{\leqslant}
\renewcommand{\geq}{\geqslant}
\newcommand{\vspan}[1]{\left \langle #1 \right \rangle}
\newcommand{\sett}[2]{ \left\{ #1 \, \, || \, \, #2 \right \} }
\newcommand{\zero}{\mathbf 0}
\newcommand{\mc}{\mathcal C}
\newcommand{\mk}{\mathcal K}
\renewcommand{\mp}{\mathcal P}
\newcommand{\mv}{\mathcal V}
 \DeclareMathOperator{\supp}{supp}
 \DeclareMathOperator{\wt}{wt}
 \DeclareMathOperator{\PG}{PG}
    \newcommand{\pg}{\PG}
 \DeclareMathOperator{\Cyl}{Cyl}
\title{\scshape Even Sets and Dual Projective Geometric Codes: A Tale of Cylinders}
\author{Sam Adriaensen\thanks{Department of Mathematics and Data Science, Vrije Universiteit Brussel, Pleinlaan 2, 1050 Elsene. \href{mailto:sam.adriaensen@vub.be}{sam.adriaensen@vub.be}. The author is supported by a postdoctoral fellowship 12A3Y25N from the Research Foundation Flanders (FWO).}}
\date{}
\begin{document}

\maketitle

\begin{abstract}
 In this paper, we prove that the smallest even sets in ${\rm PG}(n,q)$, i.e.\ sets that intersect every line in an even number of points, are cylinders with a hyperoval as base.
 This fits into a more general study of dual projective geometric codes.
 Let $q$ be a prime power, and define $\mathcal C_k(n,q)^\perp$ as the kernel of the $k$-space vs.\ point incidence matrix of ${\rm PG}(n,q)$, seen as a matrix over the prime order subfield of $\mathbb F_q$.
 Determining the minimum weight of this linear code is still an open problem in general, but has been reduced to the case $k=1$.
 There is a known construction that constructs small weight codewords of $\mathcal C_1(n,q)^\perp$ from minimum weight codewords of $\mathcal C_1(2,q)^\perp$.
 We call such codewords \emph{cylinder codewords}.
 We pose the conjecture that all minimum weight codewords of $\mathcal C_1(n,q)^\perp$ are cylinder codewords.
 This conjecture is known to be true if $q$ is prime.
 We take three steps towards proving that the conjecture is true in general:
 \begin{enumerate}
  \item We prove that the conjecture is true if $q$ is even.
  This is equivalent to our classification of the smallest even sets.
  \item We prove that the minimum weight of $\mc_1(n,q)^\perp$ is $q^{n-2}$ times the minimum weight of $\mc_1(2,q)^\perp$, which matches the weight of cylinder codewords.
  Thus, we completely reduce the problem of determining the minimum weight of $\mc_1(n,q)^\perp$ to the case $n=2$.
  \item We prove that if the conjecture is true for $n=3$, it is true in general.
 \end{enumerate}
\end{abstract}

\noindent {\bf Keywords.} Finite geometry; Projective geometry; Even sets; Sets of even type; Projective geometric codes.

\noindent {\bf MSC (2020).} 51E20, 94B25, 05B25

\section{Introduction}

\subsection{Projective geometric codes}

Throughout this paper, $p$ denotes a prime number, $q$ denotes a power of $p$, and $\fq$ denotes the finite field of order $q$.

For every vector $v = (v_1, \dots, v_n) \in \fq^n$, we define its \emph{support} as the set
\[
 \supp(v) = \sett{i \in \{1,\dots,n\}}{v_i \neq 0},
\]
and its \emph{(Hamming) weight} $\wt(v)$ as the size of its support.
A $k$-dimensional subspace $C$ of $\fq^n$ is called a \emph{linear code}.
The \emph{(Hamming) distance} between two vectors $v,w \in \fq^n$ is defined as
\[
 d(v,w) = \wt(v-w)
\]
which equals the number of coordinates where $v$ and $w$ differ.
The \emph{minimum distance} of a linear code $C$ is defined as
\[
 d(C) = \min \sett{d(v,w)}{v, w \in C,\, v \neq w},
\]
i.e.\ the minimum distance between distinct elements of $C$.
Since $C$ is an additive group, it is easy to see that
\[
 d(C) = \min \sett{\wt(v)}{v \in C \setminus \{\zero\}},
\]
hence we shall also call $d(C)$ the \emph{minimum weight} of $C$.

The $n$-dimensional projective space over $\fq$, consisting of the subspaces of $\fq^{n+1}$, will be denoted by $\pg(n,q)$.
When we refer to the dimension of spaces, we refer to their projective dimension, which is one less than their vector space dimension.
We call subspaces of dimension 0, 1, 2, and $n-1$ in $\pg(n,q)$ respectively \emph{points}, \emph{lines}, \emph{planes}, and \emph{hyperplanes}.
A subspace of dimension $k$ is simply called a \emph{$k$-space}.
We identify each subspace with the set of points incident to it.
If $S$ is a set of points, and $\pi$ is a subspace with $|S \cap \pi| = m$, we say that $\pi$ is \emph{$m$-secant} to $S$.
We use $\langle \pi, \rho \rangle$ to denote the span of two subspaces $\pi$ and $\rho$.

Let $\mp$ be the set of points of $\pg(n,q)$ and $\mk$ the set of its $k$-spaces.
We make a $p$-ary matrix $M$ where the rows correspond to the elements of $\mk$ and the columns to the elements of $\mp$.
Formally, $M \in \fp^{\mk \times \mp}$.
We define the matrix by
\[
 M(\kappa,P) = \begin{cases}
  1 & \text{if } P \in \kappa, \\
  0 & \text{otherwise}.
 \end{cases}
\]
The rowspace of $M$ is denoted as $\mc_{k}(n,q)$, and its kernel as $\mc_{k}(n,q)^\perp$.
We call $\mc_k(n,q)$ the \emph{code of points and $k$-spaces in $\pg(n,q)$} and $\mc_k(n,q)^\perp$ its \emph{dual code}.
Since each codeword $c$ in $\mc_k(n,q)$ or $\mc_k(n,q)^\perp$ is a vector over $\fp$ with coordinates labelled by $\mp$, we will identify $c$ with a function $\mp \to \fp$.

Much is known about the minimum weight of the codes $\mc_k(n,q)$, see e.g.\ \cite{A3-2023-ANoteOn, AdriaensenDenaux2023, assmuskey1992, bagchiinamdar, delsartegoethalsmacwilliams, LavrauwStormeVandeVoorde2010, polverinozullo, SzonyiWeiner}.
We will not discuss it here.
Instead, we will focus on the minimum weight of $\mc_k(n,q)^\perp$.

\subsection{Bounds on the minimum weight of the dual code}

First of all, the problem of determining $d(\mc_k(n,q)^\perp)$ and characterising its minimum weight codewords has been completely reduced to the case $k=1$ by Lavrauw, Storme, and Van de Voorde.

\begin{thm}[{\cite[Theorems 10, 12]{lavrauwstormevandevoorde2008kspaces}}]
 \label{Thm:LSVdV}
 If $k > 1$, then 
 \[
  d(\mc_k(n,q)^\perp) = d(\mc_1(n-k+1,q)^\perp).
 \] 
 Moreover, the minimum weight codewords of $\mc_k(n,q)^\perp$ arise exactly as follows:
 Take an $(n-k+1)$-space $\pi$, a codeword $c$ in the dual code of points and lines in $\pi$, and extend $c$ to a codeword of $\mc_k(n,q)^\perp$ by putting $c(P) = 0$ for all points $P \notin \pi$.
\end{thm}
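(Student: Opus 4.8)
The plan is to treat the upper bound (together with the construction of the claimed minimum weight codewords) and the lower bound (together with the structural characterization) separately. For the ``easy'' half I would first record a \emph{subspace-sum lemma}: if $c$ is a codeword of $\mc_1(m,q)^\perp$ (the dual code of points and lines of $\pg(m,q)$) and $\sigma$ is a subspace of dimension at least $1$, then $\sum_{P\in\sigma}c(P)=0$. This follows by fixing a point $Q\in\sigma$, partitioning the points of $\sigma$ different from $Q$ among the lines of $\sigma$ through $Q$, and summing the corresponding line relations of $c$: the point $Q$ gets counted $(q^{\dim\sigma}-1)/(q-1)$ times, a quantity congruent to $1$ modulo $p$, so its net overcount is $\equiv 0 \pmod p$ and the claim drops out. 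Granting the lemma, let $\pi$ be an $(n-k+1)$-space, identify its dual code of points and lines with $\mc_1(n-k+1,q)^\perp$, take $c$ in that code, and extend $c$ by zero. For any $k$-space $\kappa$ of $\pg(n,q)$ one has $\dim(\kappa\cap\pi)\ge k+(n-k+1)-n=1$, so $\sum_{P\in\kappa}c(P)=\sum_{P\in\kappa\cap\pi}c(P)=0$; hence the extension lies in $\mc_k(n,q)^\perp$ and has the same weight as $c$. Choosing $c$ of minimum weight gives $d(\mc_k(n,q)^\perp)\le d(\mc_1(n-k+1,q)^\perp)$ and shows all the listed codewords are minimum weight codewords, leaving only the reverse inequality and the absence of other minimum weight codewords.

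For the lower bound and the characterization I would induct on $n$, with base case $n=k$: there $\mc_k(k,q)^\perp$ is the set of $\fp$-vectors on the points of $\pg(k,q)$ that sum to zero, which has minimum weight $2=d(\mc_1(1,q)^\perp)$, realised exactly by the codewords supported on two points, i.e.\ the zero-extensions of minimum weight codewords of $\mc_1(1,q)^\perp$ from a line. The engine of the induction is \emph{projection from a point}: for $c\in\mc_k(n,q)^\perp$ and $V\notin\supp(c)$, define $\bar c$ on $\pg(n-1,q)$ by letting $\bar c(\bar P)$ be the sum of $c$ over the points different from $V$ on the line through $V$ projecting to $\bar P$. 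Since a $(k-1)$-space of $\pg(n-1,q)$ pulls back, together with $V$, to a $k$-space through $V$, one gets $\bar c\in\mc_{k-1}(n-1,q)^\perp$; and $\supp(\bar c)$ is contained in the image of $\supp(c)$, so $\wt(\bar c)\le\wt(c)$. If some $V\notin\supp(c)$ gives $\bar c\neq 0$, then by the induction hypothesis (with the case $k-1=1$ trivial) $\wt(c)\ge\wt(\bar c)\ge d(\mc_{k-1}(n-1,q)^\perp)=d(\mc_1(n-k+1,q)^\perp)$, which is the desired bound.

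For the characterization, take $c$ of minimum weight. First, $\langle\supp(c)\rangle$ has dimension at least $n-k+1$: otherwise, for every point $P$ of $\langle\supp(c)\rangle$ one can pick a $k$-space meeting $\langle\supp(c)\rangle$ in exactly $\{P\}$ — possible precisely because $\dim\langle\supp(c)\rangle\le n-k$ — forcing $c(P)=0$ and hence $c=0$. Next, when the projection above is nonzero, $\bar c$ is itself a minimum weight codeword of $\mc_{k-1}(n-1,q)^\perp$, so by induction $\supp(\bar c)$ spans an $(n-k+1)$-space, whence $\supp(c)$ lies in the cone over it from $V$, an $(n-k+2)$-space. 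One then excludes $\dim\langle\supp(c)\rangle=n-k+2$: if it held, then any point $V'$ outside $\langle\supp(c)\rangle$ would be forced to have vanishing projection, since an $(n-k+2)$-dimensional cone through $V'$ cannot contain both $\supp(c)$ and $V'$; hence every line through a point outside $\langle\supp(c)\rangle$ would sum to zero, and a point $P\in\supp(c)$ on a line leaving $\langle\supp(c)\rangle$ would satisfy $c(P)=0$, a contradiction. Therefore $\dim\langle\supp(c)\rangle=n-k+1$, and since every line of an $(n-k+1)$-space $\pi$ is the trace on $\pi$ of some $k$-space of $\pg(n,q)$, the restriction $c|_\pi$ lies in $\mc_1(n-k+1,q)^\perp$ — exactly the claimed form.

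The main obstacle is the \emph{degenerate case} in which $\bar c=0$ for every $V\notin\supp(c)$. Here I would upgrade the subspace-sum lemma to the statement that every subspace $\sigma$ with $\sigma\not\subseteq\supp(c)$ satisfies $\sum_{P\in\sigma}c(P)=0$ (partition $\sigma$ into lines through a point of $\sigma\setminus\supp(c)$ and induct on $\dim\sigma$). If $\supp(c)$ contains no line, this already gives $c\in\mc_1(n,q)^\perp$, so $\wt(c)\ge d(\mc_1(n,q)^\perp)>d(\mc_1(n-k+1,q)^\perp)$ for $k\ge 2$, contradicting minimality. The stubborn remnants are the case where $\supp(c)$ does contain a line, and the low value $k=2$, where the inductive input for $k-1=1$ is vacuous and one must directly confine $\supp(c)$ to a hyperplane. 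Handling these is, I expect, the technical heart of the argument: it should rely on the strict monotonicity of $j\mapsto d(\mc_1(j,q)^\perp)$ and on a precise understanding of which subspaces can lie inside the support of a minimum weight codeword — information that, for $k=1$, is developed in the remainder of the paper and can be fed back in.
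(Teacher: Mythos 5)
The paper does not prove Theorem \ref{Thm:LSVdV}; it quotes it from \cite{lavrauwstormevandevoorde2008kspaces}, so your argument has to stand on its own. The construction half does: the subspace-sum lemma is correct, every $k$-space meets the $(n-k+1)$-space $\pi$ in a subspace of dimension at least $1$, and conversely every line of $\pi$ is the trace on $\pi$ of some $k$-space, so the zero-extensions are codewords and $d(\mc_k(n,q)^\perp)\le d(\mc_1(n-k+1,q)^\perp)$ follows. The projection from a point $V\notin\supp(c)$ into $\mc_{k-1}(n-1,q)^\perp$ is also sound, and in the non-degenerate case it delivers the weight lower bound.

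Two gaps remain; you flag both but close neither. The degenerate case (all projections vanish) is actually easy and does not need the split according to whether $\supp(c)$ contains a line: if every line through every point off $\supp(c)$ sums to zero, then no line can meet $\supp(c)$ in exactly one point, so every one of the $\theta_{n-1}$ lines through a point of $\supp(c)$ carries a second support point, giving $\wt(c)\ge\theta_{n-1}+1$, while $d(\mc_1(n-k+1,q)^\perp)\le 2q^{n-k}\le 2q^{n-2}<\theta_{n-1}+1$ for $k\ge 2$ (the upper bound coming from the difference of two hyperplanes of an $(n-k+1)$-space). The serious gap is the characterisation at $k=2$. Your induction characterises minimum weight codewords of $\mc_k(n,q)^\perp$ by feeding in the characterisation of minimum weight codewords of $\mc_{k-1}(n-1,q)^\perp$; for $k=2$ the required input is the structure of minimum weight codewords of $\mc_1(n-1,q)^\perp$, which is precisely the open problem this paper is about (Conjecture \ref{Conj}), so it cannot be assumed. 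Both your confinement of $\supp(c)$ to an $(n-k+2)$-dimensional cone and your exclusion of $\dim\langle\supp(c)\rangle=n-k+2$ break down when $k=2$: the cone is then all of $\pg(n,q)$, and there are no points outside $\langle\supp(c)\rangle$ to project from. Moreover, every chain of reductions $(n,k)\to(n-1,k-1)\to\cdots$ passes through $k=2$, so this missing case undermines the characterisation for every $k\ge 2$, not just $k=2$. Confining $\supp(c)$ to a hyperplane in the $k=2$ case, using only that every plane is $0$- or $(\ge 2)$-secant to $\supp(c)$ and that $\wt(c)$ is small, is the actual content of \cite[Theorem 12]{lavrauwstormevandevoorde2008kspaces}, and it requires an idea your proposal does not supply.
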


Minimum weight codewords have been characterised if $q=p$ is prime by Bagchi and Inamdar, and Lavrauw, Storme and Van de Voorde.

\begin{thm}[{\cite[Proposition 2]{bagchiinamdar} \cite[Theorem 12]{lavrauwstormevandevoorde2008kspaces}}]
 \label{Thm:prime}
 If $q=p$ is prime, then $d(\mc_1(n,q)^\perp) = 2q^{n-1}$.
 The minimum codewords are exactly the codewords of the following form for some non-zero scalar $\alpha \in \fp$, and some distinct hyperplanes $\Pi_1$ and $\Pi_2$:
 \[
  c:\mp\to\fp: P \mapsto \begin{cases}
   \alpha & \text{if } P \in \Pi_1 \setminus \Pi_2, \\
   - \alpha & \text{if } P \in \Pi_2 \setminus \Pi_1, \\
   0 & \text{otherwise}.
  \end{cases}
 \]
\end{thm}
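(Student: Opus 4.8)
The plan is to prove the weight formula and the classification together, by induction on $n$, using hyperplane sections. For $n=1$ the space $\pg(1,q)$ is a single line, so a codeword of $\mc_1(1,q)^\perp$ is a vector of $\fp^{q+1}$ with zero coordinate sum; its minimum weight is $2$, and a weight-$2$ codeword is $\alpha$ on a point $\Pi_1$ and $-\alpha$ on a point $\Pi_2$, which is the asserted form since points of $\pg(1,q)$ are its hyperplanes. Write $\theta_m=\frac{q^{m+1}-1}{q-1}$ for the number of points of $\pg(m,q)$. Two preliminary observations: (i) if $c\in\mc_1(n,q)^\perp$ and $H$ is a hyperplane, then $c|_H\in\mc_1(n-1,q)^\perp$ because the lines of $H$ are lines of $\pg(n,q)$, and $c|_H\neq\zero$ precisely when $H$ meets $\supp(c)$; (ii) the codewords in the statement really are codewords of weight $2q^{n-1}$ — a line meets each of $\Pi_1,\Pi_2$ either in all its points or in exactly one point, and checking the few possibilities shows the line-sum of the displayed function always vanishes, while $|\Pi_i\setminus\Pi_{3-i}|=\theta_{n-1}-\theta_{n-2}=q^{n-1}$.

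Now fix a nonzero codeword $c\in\mc_1(n,q)^\perp$ with $n\geq 2$ and distinguish two cases. Suppose first that every hyperplane meets $\supp(c)$. Counting incidences between the points of $\supp(c)$ and the $\theta_n$ hyperplanes gives $\sum_H\wt(c|_H)=\theta_{n-1}\wt(c)$; each summand is nonzero, hence at least $2q^{n-2}$ by the inductive lower bound, so $\wt(c)\geq 2q^{n-2}\,\theta_n/\theta_{n-1}=2q^{n-2}\frac{q^{n+1}-1}{q^n-1}>2q^{n-1}$. Thus every codeword of weight $2q^{n-1}$ avoids some hyperplane $H_0$ (and, conversely, the codewords in the statement avoid any hyperplane through $\Pi_1\cap\Pi_2$ distinct from $\Pi_1,\Pi_2$); combined with observation (ii) and the affine analysis below, this will give $d(\mc_1(n,q)^\perp)=2q^{n-1}$ and reduce the classification to the case $\supp(c)\subseteq\pg(n,q)\setminus H_0$.

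So assume $c$ is supported on the affine space $\ag(n,q)=\pg(n,q)\setminus H_0$. A projective line not inside $H_0$ meets $H_0$ in one point, where $c$ vanishes, so $c$ sums to zero over every affine line, i.e.\ it lies in the dual of the code of points and lines of $\ag(n,q)$. I would then invoke the classical identification of this dual with the Reed--Muller code $\mathrm{RM}_p(p-2,n)$: the inclusion $\supseteq$ is immediate since $\sum_{t\in\fp}t^j=0$ for $0\leq j\leq p-2$, and for equality one argues directly that the top homogeneous part of the reduced polynomial (each exponent $<p$) representing $c$ must vanish on all of $\fp^n$ unless $c$ already has total degree at most $p-2$ — alternatively one invokes the known dimension of $\mc_1(\ag(n,p))$. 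The minimum weight of $\mathrm{RM}_p(p-2,n)$ is $2p^{n-1}=2q^{n-1}$ and its minimum-weight codewords are exactly the functions $\lambda\prod_{i=1}^{p-2}(\ell-a_i)$ with $\ell$ a non-constant affine form, $\lambda\in\fp^{\ast}$, and the $a_i\in\fp$ distinct; such a function is a non-zero constant on each of the two affine hyperplanes $\{\ell=b\}$, $b\notin\{a_1,\dots,a_{p-2}\}$, the two constants being opposite (forced by a line transversal to the level sets of $\ell$), and zero elsewhere. These two parallel affine hyperplanes close up to two distinct projective hyperplanes $\Pi_1,\Pi_2$ meeting in their common hyperplane at infinity inside $H_0$, so $\Pi_i\setminus\Pi_{3-i}$ is exactly the affine part of $\Pi_i$, and we recover precisely the description in the statement; conversely, every pair of distinct hyperplanes $\Pi_1,\Pi_2$ arises this way for a suitable choice of $H_0$.

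The step I expect to be the real work is the affine one: proving cleanly that a function supported on $\ag(n,q)$ with zero sum over every affine line lies in $\mathrm{RM}_p(p-2,n)$ — the converse is trivial, but controlling the top-degree part (or equating dimensions) needs the power-sum identities over $\fp$ handled with care — and then extracting the exact minimum-weight codewords, not merely the weight bound. If one prefers to avoid the Reed--Muller machinery, the bound $\wt(c)\geq 2q^{n-1}$ in the affine case can be recovered by a second induction, intersecting $c$ with all affine hyperplanes and double counting; but then the delicate sub-case is when $\supp(c)$ additionally avoids a second hyperplane, so that $c$ is supported on $\pg(n,q)$ minus two hyperplanes, and one must show this configuration only increases the weight. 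I would expect the bulk of the effort to go there.
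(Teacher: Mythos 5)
The paper does not prove this theorem: it is quoted verbatim from \cite{bagchiinamdar} and \cite{lavrauwstormevandevoorde2008kspaces}, so there is no in-paper argument to compare yours against. Judged on its own terms, your architecture is sound. The dichotomy is correct: if every hyperplane meets $\supp(c)$, the incidence count $\sum_H \wt(c|_H)=\theta_{n-1}\wt(c)$ together with the inductive bound $\wt(c|_H)\geq 2q^{n-2}$ forces $\wt(c)\geq 2q^{n-2}\theta_n/\theta_{n-1}>2q^{n-1}$ (this is exactly the same trick the paper uses at the start of its proof of Theorem \ref{Thm:Main:Wt}); otherwise $c$ lives on $\ag(n,p)$ and sums to zero over affine lines. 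Your verification that the displayed functions are codewords of weight $2q^{n-1}$, the transversal-line argument forcing the two constants to be opposite, and the translation of the Delsarte--Goethals--MacWilliams minimum-weight codewords $\lambda\prod_{i=1}^{p-2}(\ell-a_i)$ back into the projective statement (including the degenerate $p=2$ case, where the empty product is a constant and any pair of parallel affine hyperplanes works because $\alpha=-\alpha$) all check out.

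That said, you have correctly identified where the actual content lies, and it is essentially all of it: the identification of the dual of the affine point--line code with $\mathrm{RM}_p(p-2,n)$ (equivalently, that the affine line code is the full code $\mathrm{RM}_p((n-1)(p-1),n)$) and the classification of minimum-weight codewords of low-order generalized Reed--Muller codes are both nontrivial classical theorems. Citing them is entirely legitimate --- \cite{delsartegoethalsmacwilliams} is in this paper's bibliography, and the original proofs of the theorem in \cite{bagchiinamdar} and \cite{lavrauwstormevandevoorde2008kspaces} are more self-contained and combinatorial --- but your sketched shortcut (``the top homogeneous part of the reduced polynomial must vanish'') does not work as stated: once the reduced degree reaches $2(p-1)$, several multiples of $p-1$ contribute to the line sums $\sum_t f(a+tb)$ simultaneously, so vanishing of all line sums does not directly kill the top-degree part, and one genuinely needs the dimension count or Delsarte's theorem. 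So the proposal is a correct reduction to known results rather than a complete proof, with the one concretely sketched alternative to those citations being the step that would fail.
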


If $q$ is even, the minimum weight of $\mc_1(n,q)^\perp$ has been also been determined exactly.

\begin{thm}[{\cite[Theorem 1]{calkinkeyderesmini}}]
\label{Thm:CKdR}
 If $q$ is even, then
 \[
  d(\mc_1(n,q)^\perp) = q^{n-2}(q+2).
 \]
\end{thm}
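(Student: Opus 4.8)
Since $q$ is even we have $p = 2$, so a codeword of $\mc_1(n,q)^\perp$ is the characteristic vector of a set $S$ of points of $\pg(n,q)$ meeting every line in an even number of points — an \emph{even set} — and $d(\mc_1(n,q)^\perp)$ equals the minimum size of a non-empty even set. For $q = 2$ this is already covered by Theorem~\ref{Thm:prime}, since $2^{n-2}(2+2) = 2^n = 2\cdot 2^{n-1}$, so I assume $q \ge 4$. Two facts will be used repeatedly. First, $\frac{q^k-1}{q-1}$ is odd for every $k \ge 1$ when $q$ is even; summing the even-intersection condition over all lines inside a fixed subspace $W$ gives $\frac{q^{\dim W}-1}{q-1}\cdot|S\cap W| \equiv 0 \pmod 2$, so $S$ meets \emph{every} subspace, in particular every hyperplane, in an even number of points. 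Second, a non-empty even set spans $\pg(n,q)$: if $S$ lay in a hyperplane $H$, a line through a point of $S$ not contained in $H$ would meet $S$ in exactly one point.

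\textbf{Upper bound.} Take a plane $\pi$ containing a hyperoval $\mh$ (these exist because $q$ is even), and an $(n-3)$-space $\Pi$ skew to $\pi$ (for $n = 2$, $\Pi = \emptyset$). Let $S$ be the cone with vertex $\Pi$ and base $\mh$, with $\Pi$ itself removed; then $|S| = (q+2)q^{n-2}$. Projection from $\Pi$ identifies $\pg(n,q)\setminus\Pi$ with the quotient geometry $\pg(n,q)/\Pi \cong \pg(2,q)$, in which $S$ becomes the full preimage of $\mh$. Under this map a line of $\pg(n,q)$ is sent either onto a single point, hence meets $S$ in $0$ or $q$ points, or bijectively onto a line of $\pg(2,q)$, hence meets $S$ in $0$ or $2$ points since $\mh$ is a hyperoval. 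Thus $S$ is an even set, so $d(\mc_1(n,q)^\perp) \le q^{n-2}(q+2)$.

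\textbf{Lower bound.} I argue by induction on $n$. For $n = 2$: a point $P$ of a non-empty even set lies on $q+1$ lines, each at least $2$-secant, contributing at least $q+1$ further points, so $|S| \ge q+2$. For $n \ge 3$, suppose for contradiction that $S$ is a non-empty even set with $|S| < q^{n-2}(q+2)$; by induction every non-empty hyperplane section has size at least $q^{n-3}(q+2)$. Counting the lines through a fixed point of $S$ already gives the crude bound $|S| \ge \frac{q^n-1}{q-1}+1$, and the task is to improve it by the remaining $\frac{(q-2)(q^{n-2}-1)}{q-1}$; a plain average of the hyperplane section sizes over the hyperplanes through a point of $S$ is not enough for this. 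The refinement I would use is to study the projection $m$ of $S$ from a point $P \in S$ onto a hyperplane $\Sigma$: it is a multiset on $\Sigma$ with all multiplicities odd and positive, and for every line $\ell$ of $\Sigma$ one has $\sum_{X \in \ell} m(X) = |S \cap \langle P, \ell\rangle| - 1$, which is odd and, by the planar case, at least $q+1$. Subtracting $1$ from each multiplicity gives a non-negative multiset whose line sums are constrained modulo $2$, and in the range $|S| < q^{n-2}(q+2)$ this forces all but a few planes through $P$ to meet $S$ in exactly a hyperoval; feeding this back shows $S$ is, up to a controlled deviation, a set meeting every line in $0$ or $2$ points, whereupon a global count of the plane sections of $S$ — the number of planes meeting $S$ in each admissible size must be a non-negative integer, and the standard counting identities pin these numbers down — fails. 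Carried out at each level of the induction, this yields $|S| \ge q^{n-2}(q+2)$ and completes the proof.

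\textbf{Main obstacle.} The crux is the inductive step, and within it the exclusion of even sets of every size strictly between the crude bound $\frac{q^n-1}{q-1}+1$ and $q^{n-2}(q+2)$. The sharpest case is the borderline one, where every hyperplane section is itself a minimum even set (a cone over a hyperoval): there pure averaging is powerless, and one genuinely needs the rigidity of hyperovals together with a divisibility argument on the numbers of plane sections of each type. For $n = 3$ and the few small values of $q$ where this divisibility count is inconclusive, a direct geometric analysis of the configuration is required; for $n \ge 4$, and by Theorem~\ref{Thm:prime} for $q = 2$, no such exceptions arise.
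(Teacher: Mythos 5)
Your upper bound (the hyperoval cylinder) and your base case $n=2$ are correct, and they match the paper. The problem is the lower bound for $n\geq 3$, which is where all the content of the theorem lies. The inductive step you describe is a programme, not a proof: the assertions that the parity constraints on the projected multiset ``force all but a few planes through $P$ to meet $S$ in exactly a hyperoval,'' that feeding this back makes $S$ ``up to a controlled deviation'' a set meeting every line in $0$ or $2$ points, and that ``the standard counting identities pin these numbers down --- fails'' are not substantiated by any computation, and you yourself identify exactly this step as the main obstacle. In particular you give no mechanism for excluding even sets of size strictly between $\theta_{n-1}+1$ and $q^{n-2}(q+2)$, and the hardest regime you point to (every hyperplane section a minimum even set) is precisely the one your sketch does not handle. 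As written, the lower bound is unproven.

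For comparison, the paper obtains this statement as a corollary of Theorem \ref{Thm:Main:Wt} ($d(\mc_1(n,q)^\perp)=q^{n-2}\,d(\mc_1(2,q)^\perp)$, proved by induction on $n$) together with $d(\mc_1(2,q)^\perp)=q+2$ for even $q$, and the inductive step there avoids any attempt to classify or approximately classify the sections. Assuming a non-empty even set $S$ with $|S|<q^{n-2}(q+2)$, a double count over hyperplanes (using the inductive bound $q^{n-3}(q+2)$ on non-empty hyperplane sections) produces a $0$-secant hyperplane $\pi$; then each of the $\theta_{n-1}$ $(n-2)$-spaces of $\pi$ must lie on a second $0$-secant hyperplane, giving a set $T$ of $\theta_{n-1}+1$ hyperplanes disjoint from $S$, while choosing one further point of $S$ on each line through a fixed point of $S$ gives a subset of $S$ of size $\theta_{n-1}+1$. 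Haemers' bound on incidence-free point--hyperplane pairs (Theorem \ref{Thm:Haemers}) then yields $\theta_{n-1}+1\leq q^{(n-1)/2}(q-1)$, which is false for $n\geq 3$. If you want to complete your argument along your own lines you would need to actually carry out the rigidity and divisibility analysis you allude to; the incidence-free-set route is the concrete ingredient your proposal is missing.
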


The lower bound $d(\mc_1(n,q)^\perp) \geq q^{n-2}(q+2)$ builds on the work of Delsarte \cite{delsarte70bch}, and uses an advanced result of Delsarte that represents the codes $\mc_k(n,q)$ as so-called subfield subcodes of generalised Reed-Muller codes.
Moreover, if $q \leq 8$ is even, the minimum weight codewords of $\mc_1(n,q)^\perp$ have been completely classified \cite[\S 3]{A3-2023-ANoteOn}.

\bigskip

In general, if $q$ is not prime or even, the minimum weight $\mc_1(n,q)^\perp$ has not yet been determined (except for some small values, see Theorem \ref{Thm:p^2}).
Bagchi and Inamdar gave a good lower bound on the minimum weight $\mc_1(n,q)^\perp$.

\begin{thm}[{\cite[Theorem 3]{bagchiinamdar}}]
 \label{Thm:BI}
\[
 d(\mc_1(n,q)^\perp) \geq 2 \left( \frac{q^n-1}{q-1}\left( 1 - \frac 1p \right) + \frac 1p \right).
\]
\end{thm}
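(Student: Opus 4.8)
The statement to prove is the Bagchi--Inamdar lower bound
\[
 d(\mc_1(n,q)^\perp) \geq 2 \left( \frac{q^n-1}{q-1}\left( 1 - \frac 1p \right) + \frac 1p \right).
\]
Let me sketch how I would attack it.

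The plan is to take a nonzero codeword $c : \mp \to \fp$ of $\mc_1(n,q)^\perp$, let $S = \supp(c)$, and exploit the defining property: for every line $\ell$ of $\pg(n,q)$, $\sum_{P \in \ell} c(P) = 0$ in $\fp$. The immediate consequence is that no line can meet $S$ in exactly one point — a $1$-secant line would force $c$ to vanish on its unique point of $S$. So every line is either skew to $S$ or meets it in at least $2$ points. I would then run a standard double-counting / averaging argument: fix a point $P \in S$, count the pairs $(Q, \ell)$ with $Q \in S \setminus \{P\}$, $Q \in \ell$, and $P \in \ell$. Every line through $P$ that hits $S$ again hits it in at least one further point, but to get the factor $(1 - 1/p)$ one needs a sharper local statement, namely that every line through $P$ meeting $S$ contains at least $p$ points of $S$ (not just $2$). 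That stronger fact comes from looking at $c$ restricted to a line: its support, if nonempty, has size at least ... hmm, that's not automatic either, a single line only gives weight $\geq 2$.

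The correct route, and the one I expect the paper takes, is to reduce to the plane case or to use a residual/induction argument, but more directly: project from a point. Alternatively — and this is cleaner — consider a hyperplane $\Pi$ and the induced structure. Actually the key combinatorial lemma is the following: if $S$ is a set of points in $\pg(n,q)$ such that every line meets $S$ in $0$ or at least $2$ points, and moreover (from the $\fp$-linear relations) the multiset of intersection sizes is constrained, then through each point of $S$ every secant line carries $\geq 2$ points but we can say more by summing over a plane. In a plane $\pi$ meeting $S$, restrict $c$ to $\pi$; it is a codeword of $\mc_1(2,q)^\perp$, and there one argues directly. I would first establish the bound in the plane $n=2$: a nonzero codeword $c$ of $\mc_1(2,q)^\perp$ has support meeting every line in $0$ or $\geq 2$ points; pick $P \in S$, the $q+1$ lines through $P$ partition $S \setminus \{P\}$, and each nonempty part has size $\geq 1$, giving $|S| \geq 1 + (q+1) = q+2$ — too weak. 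To get $2(q+1)(1-1/p) + 2/p$ in the plane one must use that the $c$-values on a secant line sum to zero in $\fp$, hence a secant line with support size in $\{1, \dots, p-1\}$ is impossible only gives $\geq 2$; but a line whose support has size $s$ with $2 \le s \le p-1$ could still have values summing to zero? No — with $s$ nonzero values in $\fp$ summing to zero, $s=1$ is the only forbidden case, $s \ge 2$ is fine. So the factor $1 - 1/p$ must come from a global count, not a per-line count.

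Here is the mechanism I would actually use, which does produce the $1/p$: count incident (point, line) pairs weighted by $c$. For a fixed point $P$, $\sum_{\ell \ni P} \left( \sum_{Q \in \ell, Q \ne P} c(Q) \right) = \sum_{\ell \ni P}(-c(P)) = -(q+1) c(P)$, but also the left side equals $\sum_{Q \ne P} c(Q) = -c(P) + \sum_{Q} c(Q)$... this kind of identity, pushed through all of $\pg(n,q)$, combined with the fact that a codeword is orthogonal to the all-ones vector's scalar multiples only in characteristic dividing $q+1$-ish, yields a relation whose analysis forces the weight bound. Concretely, I would set up the count of triples and use that each secant line has $\ge 2$ support points to get $d \ge$ something, then improve the constant by the observation that the number of secant lines through a point, times $(p-1)$-ish divisibility, refines it. The \textbf{main obstacle} is precisely extracting the $(1 - 1/p)$ improvement over the naive $2\frac{q^n-1}{q-1}$-type bound: this requires a clever choice of what to double-count — I believe one fixes a point $P \notin S$ and a hyperplane, or uses the fact that the restriction of $c$ to any hyperplane is again a codeword (of $\mc_1(n-1,q)^\perp$ or zero), enabling an induction on $n$ with the plane case $n=2$ as base, where the $1/p$ enters through a careful analysis of the value distribution of $c$ along the $q+1$ lines through a support point. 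I would structure the final writeup as: (i) base case $n = 2$ via value-counting along a pencil of lines; (ii) inductive step by restricting to hyperplanes and averaging the weight bound over all $\frac{q^{n+1}-1}{q-1}$ hyperplanes; (iii) reconcile the arithmetic to land exactly on $2\big(\frac{q^n-1}{q-1}(1-\frac1p) + \frac1p\big)$.
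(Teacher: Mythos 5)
There is a genuine gap: this is a cited result (the paper does not reprove it; it points to \cite{bagchiinamdar} and to the short elementary proof in \cite[Result 3.2]{A3-2023-ANoteOn}), and your proposal never actually locates the mechanism that produces the factor $\bigl(1-\tfrac1p\bigr)$ --- you correctly diagnose this as ``the main obstacle'' and then leave it as a plan. Your per-line observations (every secant line carries at least $2$ support points; a $1$-secant line is impossible) only give $|S|\geq \frac{q^n-1}{q-1}+2$, and the global identities you sketch (summing $c$ over pencils of lines) collapse to $\sum_{Q}c(Q)=0$ and give nothing further. The proposed inductive step ``averaging the weight bound over all hyperplanes'' also does not help, because the restriction of $c$ to a hyperplane may be zero, and averaging a lower bound that fails on some hyperplanes yields no lower bound.

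The missing idea is a \emph{local} count at a well-chosen support point that separates the $2$-secant lines from the rest. Fix $P\in S=\supp(c)$ with $c(P)=\alpha$. For every line $\ell\ni P$ one has $\sum_{Q\in\ell\setminus\{P\}}c(Q)=-\alpha\neq 0$, so $\ell$ meets $S\setminus\{P\}$; if $\ell$ is exactly $2$-secant, its second support point has value $-\alpha$, and distinct $2$-secant lines through $P$ give \emph{distinct} such points. Writing $\theta_{n-1}=\frac{q^n-1}{q-1}$ for the number of lines through $P$, $N_2$ for the number of $2$-secant ones among them, and $w_\beta$ for the number of points of value $\beta$, this yields
\[
|S|-1\;=\;\sum_{\ell\ni P}\bigl(|\ell\cap S|-1\bigr)\;\geq\; 2\bigl(\theta_{n-1}-N_2\bigr)+N_2\;=\;2\theta_{n-1}-N_2\;\geq\;2\theta_{n-1}-w_{-\alpha}.
\]
If some support point lies on no $2$-secant line, then $|S|\geq 2\theta_{n-1}+1$ and you are done; otherwise the set of occurring values is closed under negation, so you may choose $P$ with $w_{-c(P)}$ minimal, hence $w_{-\alpha}\leq |S|/(p-1)$. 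Substituting gives $|S|\bigl(1+\tfrac1{p-1}\bigr)\geq 2\theta_{n-1}+1$, i.e.\ $|S|\geq\bigl(2\theta_{n-1}+1\bigr)\tfrac{p-1}{p}=2\theta_{n-1}\bigl(1-\tfrac1p\bigr)+\tfrac{p-1}{p}$, which is at least the stated bound. This one-point argument works directly in every dimension, so no induction on $n$ and no reduction to the plane is needed.
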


In particular for $n=2$, this bound equals $2\left( q - \frac qp + 1 \right)$.
A short elementary proof of the Bagchi-Inamdar bound was given by the author in \cite[Result 3.2]{A3-2023-ANoteOn}.
The bound was recently improved by Csajb\'ok, Longobardi, Marino, and Trombetti if $n > 2$, using the polynomial method.

\begin{thm}[{\cite{csajbok2025lower}}]
 \label{Thm:Csajbok}
 If $q$ is odd, then
\[
 d(\mc_1(n,q)^\perp) \geq 2 \left( q - \frac qp + 1 \right) q^{n-2}.
\]
\end{thm}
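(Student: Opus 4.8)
The plan is to argue by induction on $n$, anchoring at the Bagchi--Inamdar bound of Theorem~\ref{Thm:BI}. For $n=2$ that theorem already gives $d(\mc_1(2,q)^\perp)\ge 2\bigl(q-\tfrac qp+1\bigr)$, which is the claim; and when $q$ is prime the full statement is Theorem~\ref{Thm:prime}, so we may assume $q>p$. Fix $n\ge 3$, assume the theorem for $n-1$, let $c\neq\zero$ be a codeword of $\mc_1(n,q)^\perp$, and put $S=\supp(c)$ and $d_{n-1}=d(\mc_1(n-1,q)^\perp)$; by the induction hypothesis $d_{n-1}\ge 2\bigl(q-\tfrac qp+1\bigr)q^{n-3}$, so it is enough to prove $|S|\ge q\,d_{n-1}$. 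I would split into two cases, according to whether or not $S$ meets every $(n-2)$-dimensional subspace of $\pg(n,q)$.

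Recall that for any hyperplane $\Pi$ the restriction $c|_\Pi$ lies in $\mc_1(n-1,q)^\perp$ (lines of $\Pi$ are lines of $\pg(n,q)$), hence is $\zero$ or has weight $\ge d_{n-1}$; call $\Pi$ \emph{free} in the first case, equivalently when $S\cap\Pi=\emptyset$. The mechanism for the non-blocking case is this: if some $(n-2)$-space $\mu$ is disjoint from $S$ and lies in at most one free hyperplane, then, since the $q+1$ hyperplanes through $\mu$ partition the points off $\mu$ and therefore partition $S$, at least $q$ of them are non-free and
\[
 |S|=\sum_{\Pi\supseteq\mu}\wt\!\left(c|_\Pi\right)\ \ge\ q\,d_{n-1}.
\]
Producing such a $\mu$ when $S$ does not block all $(n-2)$-spaces amounts to a finite case analysis: begin with any $(n-2)$-space $\mu_0$ missing $S$; if at most one hyperplane through $\mu_0$ is free we are done, and otherwise two free hyperplanes $\Pi_1,\Pi_2\supseteq\mu_0$ give $S\cap(\Pi_1\cup\Pi_2)=\emptyset$, after which one passes to an $(n-2)$-space $\mu\subseteq\Pi_1$ with $\mu\neq\mu_0$ --- still disjoint from $S$, but no longer in the pencil on $\mu_0$ --- and, using that two distinct hyperplanes meet in exactly an $(n-2)$-space, verifies that the number of free hyperplanes through $\mu$ has strictly decreased, iterating until only one remains or until $S$ is seen to avoid enough hyperplanes that $|S|\ge q\,d_{n-1}$ is immediate. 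This branch should be laborious but essentially mechanical.

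The genuine difficulty --- and the reason the constant $q-\tfrac qp+1$ has resisted elementary arguments --- is the case where $S$ is a blocking set with respect to $(n-2)$-spaces. Slicing is then unavailable, and incidence counting is too weak: averaging $\wt(c|_\Pi)$ over the hyperplanes through a fixed point of $S$ gives only $|S|\ge q\,d_{n-1}-(q-2)$, and no incidence identity removes this deficit of order $q$. The way through is the polynomial method of the cited paper. One fixes a hyperplane $H_\infty$ meeting $S$ suitably and encodes the affine part $S\setminus H_\infty$ by a R\'edei-type polynomial over $\fq$; the codeword relations $\sum_{P\in\ell}c(P)=0$ translate, for each point of $H_\infty$ regarded as a direction, into a forced root-multiplicity pattern making an associated univariate polynomial lacunary. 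The decisive input invisible to counting is that the values of $c$ lie in the prime subfield $\fp\subsetneq\fq$, which forces the relevant polynomial to coincide with a $p$-th power up to terms of small degree; feeding this into the Blokhuis--Ball--Brouwer--Storme--Sz\H{o}nyi theory of lacunary polynomials bounds the number of admissible directions, and hence $|S|$, below by $2\bigl(q-\tfrac qp+1\bigr)q^{n-2}$ itself, so no recursion is needed in this case. Combining the two cases closes the induction. I expect almost all of the effort to be concentrated in this last case, both in constructing the correct R\'edei polynomial in dimension $n$ and in extracting precisely the constant $q-\tfrac qp+1$ from the lacunary-polynomial estimates.
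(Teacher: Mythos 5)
Your overall skeleton -- induct on $n$ from the Bagchi--Inamdar base case, show $|S|\ge q\,d_{n-1}$ by slicing with the $q+1$ hyperplanes through an $(n-2)$-space disjoint from $S$ -- is exactly how the paper obtains this statement: Theorem~\ref{Thm:Csajbok} is derived there as an immediate consequence of Theorem~\ref{Thm:Main:Wt} (namely $d(\mc_1(n,q)^\perp)=q^{n-2}d(\mc_1(2,q)^\perp)$, valid for all $q$) together with Theorem~\ref{Thm:BI} for $n=2$, and no polynomials appear anywhere. However, you have the two cases exactly backwards, and this creates genuine gaps. The case you call ``the genuine difficulty'' (every hyperplane, a fortiori every $(n-2)$-space, meets $S$) is in fact trivial: average over \emph{all} $\theta_n$ hyperplanes rather than over the hyperplanes through a fixed point of $S$. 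Since each of the $\theta_n$ hyperplanes then carries a nonzero restriction of $c$, hence at least $d_{n-1}$ points of $S$, double counting incidences gives $|S|\,\theta_{n-1}\ge \theta_n\,d_{n-1}$, i.e.\ $|S|\ge \frac{q^{n+1}-1}{q^n-1}\,d_{n-1}>q\,d_{n-1}$. Your claimed deficit of order $q$ is an artifact of the weaker local average; no R\'edei/lacunary-polynomial machinery is needed, and as written your treatment of this case is in any event a deferral to the cited paper rather than a proof.

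The real difficulty sits in your ``mechanical'' case: when $S$ misses some hyperplane $\Pi_0$ you must produce an $(n-2)$-space $\mu$ disjoint from $S$ lying on at most one $0$-secant hyperplane, and your iteration does not do this. There is no reason that passing to another $(n-2)$-space inside a free hyperplane strictly decreases the number of free hyperplanes through it, and you give none; the process need not terminate in the desired state, nor does ``$S$ avoids many hyperplanes'' immediately force $|S|\ge q\,d_{n-1}$. The paper's resolution is the one ingredient your proposal is missing: if every $(n-2)$-space of $\Pi_0$ lies on a second $0$-secant hyperplane, one collects a set $T$ of $\theta_{n-1}+1$ pairwise distinct $0$-secant hyperplanes, and separately a subset $S'\subseteq S$ of $\theta_{n-1}+1$ points (one extra point of $S$ on each line through a fixed point of $S$); since no point of $S'$ lies on a member of $T$, Haemers' incidence-free bound (Theorem~\ref{Thm:Haemers}) forces $\theta_{n-1}+1\le q^{(n-1)/2}(q-1)$, which is false for $n\ge 3$. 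Without Haemers' inequality, or some equivalent variance/interlacing bound, this branch of your argument does not close.
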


For comparison, we also state the best known upper bound on the minimum weight of $\mc_1(n,q)^\perp$.

\begin{thm}[{\cite[Corollary 4.15]{LavrauwStormeVandeVoorde2010}}]
 \label{Thm:UB}
\[
 d(\mc_1(n,q)^\perp) \leq \left( 2q - \frac{q-p}{p-1} \right) q^{n-2}.
\]
\end{thm}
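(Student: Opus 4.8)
The plan is to exhibit, for each $n \geq 2$, one nonzero codeword of $\mc_1(n,q)^\perp$ of weight exactly $\left(2q - \frac{q-p}{p-1}\right)q^{n-2}$. If $q=p$ is prime this number is $2q^{n-1}$, attained by the codewords of Theorem~\ref{Thm:prime}; so assume $q = p^h$ with $h \geq 2$ and put $A := \frac{p^h-1}{p-1} = 1 + p + \dots + p^{h-1}$, so that $A - 1 = \frac{q-p}{p-1}$.

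\textbf{Step 1: a planar codeword of weight $2q - (A-1)$.}
In $\pg(2,q)$, let $X$ be a \emph{scattered} $\fp$-linear set of rank $h+1$. One way to obtain it: take a maximum (rank $h$) scattered $\fp$-linear set on a line $\ell$, e.g.\ the one defined by the $\fp$-subspace $\{(x,x^p) : x \in \fq\}$ of the $2$-dimensional $\fq$-space underlying $\ell$, and enlarge its defining $\fp$-subspace by a single vector spanning a point off $\ell$; a short computation shows the enlarged $\fp$-subspace is still scattered. Then $|X| = \frac{p^{h+1}-1}{p-1} = pA+1$, the rank of $X$ exceeds $h$ so $X$ meets every line of $\pg(2,q)$, the line $\ell$ meets $X$ in the rank-$h$ scattered set we started from (which has $A$ points), and, $X$ being scattered, every line meets $X$ in a scattered linear set of rank at most $h$, hence in at most $A$ points. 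Set $c_0 := \one_X - \one_\ell \colon \mp \to \fp$. Since a non-empty $\fp$-linear set on a line has cardinality $\equiv 1 \pmod p$ (reduce modulo $p$ the identity $p^r - 1 = \sum_P (p^{m_P}-1)$, where $r$ is the rank and the $m_P \geq 1$ are the point weights), every line of $\pg(2,q)$ meets $X$ in $\equiv 1 \pmod p$ points and $\ell$ in $1$ or $q+1 \equiv 1 \pmod p$ points; hence all line-sums of $c_0$ vanish in $\fp$, i.e.\ $c_0 \in \mc_1(2,q)^\perp$. Finally $\wt(c_0) = |X| + (q+1) - 2|X\cap\ell| = (pA+1) + (q+1) - 2A = 2q-(A-1) > 0$, so $c_0 \neq 0$.

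\textbf{Step 2: coning up to $\pg(n,q)$.}
I would establish the following lemma. Let $\sigma \subseteq \pg(n,q)$ be a subspace of dimension at least $2$, let $c$ be a codeword of the dual code of points and lines of $\sigma$, and let $V$ be a point outside $\sigma$. Define $\tilde c$ on $\langle V, \sigma\rangle$ by $\tilde c(V) = 0$ and $\tilde c(R) = c(\bar R)$ for $R \neq V$, where $\bar R$ is the unique point of $\langle V,R\rangle \cap \sigma$. Then $\tilde c$ is a codeword of the dual code of points and lines of $\langle V, \sigma\rangle$ with $\wt(\tilde c) = q\,\wt(c)$. For the line-sum condition, take a line $m \subseteq \langle V, \sigma\rangle$: if $V \in m$, the $q$ points of $m\setminus\{V\}$ all project to one point $P$ of $\sigma$, so $\sum_{R\in m}\tilde c(R) = q\,c(P) = 0$; if $V \notin m$, the plane $\langle V,m\rangle$ meets $\sigma$ in a line $\bar m$, projection from $V$ restricts to a bijection $m \to \bar m$, and $\sum_{R\in m}\tilde c(R) = \sum_{Q\in\bar m} c(Q) = 0$. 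For the weight, the fibre of the projection above a point $P$ with $c(P)\neq 0$ is the punctured line $\langle V,P\rangle\setminus\{V\}$ of size $q$, and distinct such $P$ give disjoint fibres. Applying this lemma $n-2$ times, starting from $c_0$ placed on a plane inside $\pg(n,q)$, yields a nonzero codeword of $\mc_1(n,q)^\perp$ of weight $q^{n-2}\bigl(2q-(A-1)\bigr) = \left(2q-\frac{q-p}{p-1}\right)q^{n-2}$, which is the asserted bound.

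The coning lemma is routine incidence bookkeeping. The substance is Step 1: it rests on the structure theory of $\fp$-linear sets — the existence of a rank-$(h+1)$ scattered set in $\pg(2,q)$, the fact that a non-empty $\fp$-linear set on a line has size $\equiv 1 \pmod p$, and the bound $A$ on its largest line-section. The point I expect to cost the most care is arranging that some line meets $X$ in the full $A$ points (so that the weight is exactly $2q-(A-1)$), while no line meets it in more; once these geometric facts are in place the weight count closes the argument.
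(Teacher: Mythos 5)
This statement is quoted in the paper from \cite[Corollary 4.15]{LavrauwStormeVandeVoorde2010} without proof, so there is no internal argument to compare against; judged on its own, your proof is correct, and it is essentially the argument behind the cited result. Your Step 1 is the classical ``linear blocking set of R\'edei type minus its R\'edei line'' construction: writing $\fq^3 = W \oplus \langle v\rangle_{\fq}$ with $W$ the $2$-space underlying $\ell$, one checks both that $U = U_0 \oplus \langle v\rangle_{\fp}$ is scattered (a relation $w_2+\lambda_2 v = \mu(w_1+\lambda_1 v)$ forces $\mu=\lambda_2/\lambda_1\in\fp$ or reduces to a relation inside $U_0$) and that $U\cap W = U_0$, so $|X\cap\ell|=A$ exactly; together with the $1 \bmod p$ line-intersection property of linear sets and the fact that rank $h+1$ forces $X$ to block every line, the weight computation closes. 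Your Step 2 is precisely the paper's Proposition \ref{Prop:Cylinder} (the cylinder construction, with a point as vertex, iterated $n-2$ times), so you could simply cite it rather than reprove it. Two minor remarks: the condition you flag as the delicate point --- that no line meets $X$ in more than $A$ points --- is actually irrelevant, since the weight of $\one_X-\one_\ell$ depends only on $|X|$, $q+1$, and $|X\cap\ell|$; and for an upper bound on the minimum distance you only need a nonzero codeword of weight at most the stated value, so exactness of the weight is a bonus rather than a requirement.
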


If $q=p^2$, some improved bounds on the minimum weight of $\mc_1(2,q)^\perp$ are known.

\begin{thm}
 \label{Thm:p^2}
\begin{enumerate}
 \item \textnormal{\cite[Theorem 1]{KeydeResmini}} $d(\mc_1(2,9)^\perp) = 15$.
 \item \textnormal{\cite[Corollary 1.2]{ClarkHatfieldKeyWard}} $d(\mc_1(2,25)^\perp) = 45$.
 \item \textnormal{\cite[Corollary 1.5]{deboeckvandevoorde2022}} If $p \geq 7$ is prime, then $d(\mc_1(2,p^2)^\perp) \geq 2p^2-2p+5$.
\end{enumerate}
\end{thm}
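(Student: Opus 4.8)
The plan is to treat the three parts separately, since they involve different fields and different techniques, though in every case the object is $d(\mc_1(2,q)^\perp)$ for $q = p^2$. The common starting point is this: a nonzero codeword $c$ with support $S := \supp(c)$ satisfies $\sum_{P \in \ell} c(P) = 0$ for every line $\ell$, so no line is $1$-secant to $S$ (a $1$-secant would force $c$ to vanish on its unique point of $S$). Writing $a_i$ for the number of $i$-secant lines, we have the standard identities $\sum_i a_i = q^2+q+1$, $\sum_i i\,a_i = |S|(q+1)$ and $\sum_i \binom{i}{2} a_i = \binom{|S|}{2}$, together with $a_1 = 0$ and, for each line $\ell$, the $\fp$-identity $\sum_{\lambda \in \fp^\times} \lambda\,|c^{-1}(\lambda) \cap \ell| \equiv 0 \pmod p$ relating the fibres of $c$. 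For $|S|$ just above the Bagchi--Inamdar value of Theorem \ref{Thm:BI} these constraints cut the admissible weight/secant patterns down to a short list.

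For parts (1) and (2) the matching upper bounds $d(\mc_1(2,9)^\perp) \le 15$ and $d(\mc_1(2,25)^\perp) \le 45$ are exactly the cases $(n,q) \in \{(2,9),(2,25)\}$ of Theorem \ref{Thm:UB}, whose proof produces explicit codewords of these weights. Hence it suffices to improve the Bagchi--Inamdar values $14$ and $42$ to $15$ and $45$. I would take the pattern list from the counting above and eliminate each surviving configuration, either by a direct geometric argument --- equality in the counting forces a very rigid, subgeometry-type configuration that the $\fp$-value identities then rule out --- or, where no clean argument is available, by an exhaustive search over codewords supported on such a configuration, carried out modulo the action of $\pgl(3,q)$.

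For part (3), with $q = p^2$ and $p \ge 7$, one wants $d(\mc_1(2,q)^\perp) \ge 2p^2 - 2p + 5$, which is three more than Bagchi--Inamdar. Suppose $c \neq \zero$ has $|S| \le 2p^2 - 2p + 4$. Since $|S|$ lies only a bounded amount above $2q$, a careful choice of line $\ell_\infty$ makes $|S \cap \ell_\infty|$ small; after coordinatising $\ag(2,q) = \pg(2,q)\setminus\ell_\infty$ one attaches to the affine part of $c$ a R\'edei-type polynomial in $\fq[X,Y]$ that records both the points of $S$ and the values of $c$. The codeword condition says that in each direction the associated one-variable specialisation is lacunary --- its exponents have a gap governed by the secant numbers in that direction --- and R\'edei's lacunary polynomial method (in the form refined by Blokhuis and others) forces this specialisation, for all but a bounded number of directions, to be an $\fp$-linearised power. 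Using in addition that $c$ is $\fp$-valued gives further divisibility constraints on the polynomial, and comparing its total degree (at most $2p^2 - 2p + 4$) with the degree imposed by this structure --- a multiple of $p$ away from the extremes --- yields the extra slack of $3$, hence a contradiction. The delicate points are the choice of $\ell_\infty$, which is precisely where the bound $|S| \le 2q + O(1)$ enters, and bounding the number of exceptional directions.

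The genuine obstacle is the lower bound in parts (1) and (2): $p = 3$ and $p = 5$ are exactly the fields for which the polynomial argument of part (3) fails to close (it requires $p \ge 7$), so one must either engineer a field-specific sharpening or push the structural reduction --- fix the intersection pattern with a well-chosen line, then the $\pgl(3,q)$-orbit --- far enough that the remaining check is both rigorous and light. Only one extra unit is being claimed for $q = 9$ and three for $q = 25$ above Bagchi--Inamdar, so the configuration list has to be pruned to almost nothing before any search; making that pruning airtight is where the real work lies.
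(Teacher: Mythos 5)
This theorem is not proved in the paper at all: all three parts are imported verbatim from the literature (Key--de Resmini for $q=9$, Clark--Hatfield--Key--Ward for $q=25$, De Boeck--Van de Voorde for $q=p^2$, $p\geq 7$), so there is no in-paper argument to compare against. What you have written is a research programme, not a proof, and by your own account it has unfilled holes. Concretely: in parts (1) and (2) you correctly observe that Theorem \ref{Thm:UB} supplies matching upper bounds ($15$ for $q=9$, $45$ for $q=25$) and that only the lower bounds $14\to 15$ and $42\to 45$ need improving, but the improvement itself is deferred to ``a direct geometric argument'' or ``an exhaustive search over codewords \dots modulo the action of $\pgl(3,q)$'' that you never carry out. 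The ``short list'' of admissible secant patterns is never derived, and without it the search space is not even specified; for $q=25$ one would have to rule out weights $42$, $43$, and $44$, each of which admits many a priori intersection patterns. This is exactly where the cited papers invest their effort (including genuine computation), so the step you skip is the entire content of the results.

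For part (3) the situation is similar: you name the right family of tools (R\'edei-type polynomials attached to the affine part of a codeword, lacunarity in each direction, the $\fp$-valuedness of $c$), but the argument that is supposed to produce the ``extra slack of $3$'' over Bagchi--Inamdar is asserted rather than executed. No degree bound is actually computed, the ``bounded number of exceptional directions'' is not bounded, and the choice of $\ell_\infty$ with small intersection is not justified quantitatively (with $|S|\leq 2p^2-2p+4$ one gets an average of just under $2$ points of $S$ per line, but turning that into a usable line at infinity plus a workable polynomial identity is the hard part). It is also worth noting that the cited proof of De Boeck and Van de Voorde proceeds through a structural characterisation of small weight codewords via embedded antipodal planes rather than a direct lacunary-polynomial degree comparison, so even the route you sketch diverges from the known one at the point where the details would have to be supplied. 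In short: the plan identifies plausible techniques, but none of the three lower bounds is actually established.
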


\subsection{The cylinder construction and a conjecture}

Next, we discuss an important connection between small weight codewords of $\mc_1(2,q)^\perp$ and small weight codewords of $\mc_1(n,q)^\perp$.
We start by discussing the concept of a cylinder as a point set in $\pg(n,q)$.

\begin{df}
 Consider two disjoint subspaces $\pi$ and $\rho$ of $\pg(n,q)$, and let $S$ be a set of points in $\pi$.
 The \emph{cylinder} with vertex $\rho$ and base $S$ is defined as the set of points
 \[
  \bigcup_{P \in S} \vspan{P, \rho} \setminus \rho.
 \]
\end{df}

This inspires the following connection between small weight codewords of $\mc_1(2,q)^\perp$ and small weight codewords of $\mc_1(n,q)^\perp$.
The construction can be obtained by inductively applying a construction of Bagchi and Inamdar \cite[Lemma 6]{bagchiinamdar}.

\begin{prop}[{\cite[Lemma 6]{bagchiinamdar}}]
 \label{Prop:Cylinder}
 Consider a codeword $c \in \mc_1(2,q)^\perp$.
 Embed $\pg(2,q)$ as a plane $\pi$ in $\pg(n,q)$.
 Choose an $(n-3)$-space $\rho$ disjoint to $\pi$.
 Construct the function $\Cyl_\rho(c)$ on the points $\mp$ of $\pg(2,q)$ by defining
 \[
  \Cyl_\rho(c): \mp \to \fp: P \mapsto \begin{cases}
    c(\vspan{\rho,P} \cap \pi) & \text{if } P \notin \rho, \\
    0 & \text{if } P \in \rho.
  \end{cases}
 \]
 Then $\Cyl_\rho(c)$ is a codeword of $\mc_1(n,q)^\perp$ of weight $q^{n-2} \wt(c)$.
 We call $\Cyl_\rho(c)$ a \emph{cylinder codeword} with vertex $\rho$ and base $c$.
\end{prop}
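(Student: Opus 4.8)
The plan is to verify both assertions directly by analysing the \emph{projection from $\rho$ onto $\pi$}. Since $\rho$ is an $(n-3)$-space disjoint from the plane $\pi$, the two span the whole of $\pg(n,q)$, and for every point $P \notin \rho$ the space $\vspan{\rho,P}$ has dimension $n-2$ and meets $\pi$ in a single point $\proj_\rho(P) := \vspan{\rho,P} \cap \pi$; thus $\Cyl_\rho(c)(P) = c(\proj_\rho(P))$ for $P \notin \rho$ and $\Cyl_\rho(c)(P) = 0$ for $P \in \rho$. Two elementary facts about $\proj_\rho$ will drive everything: \emph{(i)} the fibre of a point $Q \in \pi$ is exactly $\proj_\rho^{-1}(Q) = \vspan{\rho,Q} \setminus \rho$ (indeed $\vspan{\rho,P} = \vspan{\rho,Q}$ as soon as $Q \in \vspan{\rho,P}$, since both are $(n-2)$-spaces); \emph{(ii)} every line $\ell$ of $\pg(n,q)$ disjoint from $\rho$ is mapped by $\proj_\rho$ bijectively onto a line of $\pi$.

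For the weight, fact \emph{(i)} shows that the fibres $\vspan{\rho,Q} \setminus \rho$, as $Q$ ranges over the points of $\pi$, partition $\mp \setminus \rho$, and each of them contains $\frac{q^{n-1}-1}{q-1} - \frac{q^{n-2}-1}{q-1} = q^{n-2}$ points. Hence $\supp(\Cyl_\rho(c))$ is the disjoint union of the $\wt(c)$ fibres lying above the points $Q$ with $c(Q) \neq 0$, which gives $\wt(\Cyl_\rho(c)) = q^{n-2}\,\wt(c)$.

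For the codeword property we must show that $\sum_{P \in \ell} \Cyl_\rho(c)(P) = 0$ for every line $\ell$ of $\pg(n,q)$, splitting into cases according to the position of $\ell$ relative to $\rho$. If $\ell \subseteq \rho$ the sum is trivially $0$. If $\ell$ meets $\rho$ in a single point $R$, then $\vspan{\rho,P} = \vspan{\rho,\ell}$ for each of the $q$ points $P \in \ell \setminus \{R\}$ (an equality of $(n-2)$-spaces), so all of them have a common projection $Q$, and the line-sum equals $q\,c(Q) = 0$ in $\fp$ because $p \mid q$. If $\ell \cap \rho = \emptyset$, then by \emph{(ii)} $\proj_\rho$ maps $\ell$ bijectively onto the line $m := \vspan{\rho,\ell} \cap \pi$ of $\pi$, so the line-sum equals $\sum_{Q \in m} c(Q)$, which is $0$ since $c \in \mc_1(2,q)^\perp$ and $m$ is a line of $\pi \cong \pg(2,q)$. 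A line meeting $\rho$ in two points is contained in $\rho$, so these three cases are exhaustive and $\Cyl_\rho(c) \in \mc_1(n,q)^\perp$.

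The only step that amounts to more than bookkeeping is \emph{(ii)}: one has to check, via the projective dimension formula together with $\vspan{\rho,\pi} = \pg(n,q)$, that $\vspan{\rho,\ell}$ is a hyperplane whose intersection $m$ with $\pi$ is genuinely a line — not a point and not all of $\pi$ — and that $\vspan{\rho,Q} \cap \ell$ is a single point for each $Q \in m$. Injectivity of $\proj_\rho$ restricted to $\ell$ then follows because each fibre $\vspan{\rho,Q} \setminus \rho$ meets $\ell$ in at most one point (if $\ell$ lay inside $\vspan{\rho,Q}$, the hyperplane $\vspan{\rho,\ell}$ would be contained in the $(n-2)$-space $\vspan{\rho,Q}$), and surjectivity follows by comparing the $q+1$ points of $\ell$ with the $q+1$ points of $m$. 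As an alternative one could treat only the case where $\rho$ is a point and then iterate the Bagchi--Inamdar step $n-2$ times starting from $\pg(2,q)$, but that single step requires the same dimension analysis, so the direct argument is no more costly.
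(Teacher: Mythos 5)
Your proof is correct. Note that the paper does not actually prove this proposition: it cites it as \cite[Lemma 6]{bagchiinamdar} and remarks only that it "can be obtained by inductively applying" the Bagchi--Inamdar construction, in which the vertex is a single point and the dimension is raised by one at each step. Your argument instead verifies the statement in one pass by analysing the projection $\proj_\rho$ from the full $(n-3)$-dimensional vertex, and all the steps check out: the fibres $\vspan{\rho,Q}\setminus\rho$ do partition the complement of $\rho$ into sets of size $q^{n-2}$, which gives the weight; and the three-way case split on $|\ell\cap\rho|$ is exhaustive and each case is handled correctly (in particular, the line sum over a tangent line to $\rho$ is $q\,c(Q)=0$ because $p\mid q$, and a line disjoint from $\rho$ is carried bijectively onto a line of $\pi$ by the dimension count you indicate). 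The direct argument buys a self-contained, non-inductive verification at essentially no extra cost over a single Bagchi--Inamdar step, as you yourself observe in your closing remark; the inductive route would additionally require checking that the composite of the intermediate projections agrees with $\proj_\rho$, which your approach avoids.
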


We pose the following conjecture.

\begin{conj}
 \label{Conj}
 All minimum weight codewords of $\mc_1(n,q)^\perp$ are cylinder codewords, for every prime power $q$, and every integer $n > 2$.
\end{conj}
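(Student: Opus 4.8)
Conjecture \ref{Conj} is out of reach for arbitrary $q$ — the plane case $n=2$ is itself open — so the realistic goal is a reduction to low dimension together with a complete treatment of the cases where enough structure is available. My plan has three parts, mirroring the three contributions announced above. First I would prove the weight equality $d(\mc_1(n,q)^\perp) = q^{n-2}\,d(\mc_1(2,q)^\perp)$ for every prime power $q$ and every $n \geq 2$. Write $d_m := d(\mc_1(m,q)^\perp)$. The upper bound $d_n \leq q^{n-2}d_2$ is immediate: apply Proposition \ref{Prop:Cylinder} to a minimum weight codeword of $\mc_1(2,q)^\perp$. For the matching lower bound it suffices, by induction on $n$, to show $d_n \geq q\,d_{n-1}$ for $n \geq 3$. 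Let $c$ be a minimum weight codeword of $\mc_1(n,q)^\perp$ with support $S$, and recall the elementary facts that every secant line to $S$ is at least $2$-secant, that $c$ sums to zero over every subspace of positive dimension, and that the restriction $c|_\Pi$ to a hyperplane $\Pi$ is a codeword of $\mc_1(n-1,q)^\perp$, so $|S\cap\Pi| \in \{0\}\cup[d_{n-1},\infty)$. I would split on whether some hyperplane misses $S$: if it does, $c$ is (after extension by zero) a codeword of the \emph{affine} point--line dual code of $\ag(n,q)$, and a separate, easier argument using the rigidity of affine lines applies; if no hyperplane misses $S$, then every hyperplane section has weight $\geq d_{n-1}$, and a counting argument over pencils and bundles of hyperplanes — combining the hyperplane intersection identities with the improved lower bounds on $d_{n-1}$ from Theorems \ref{Thm:BI} and \ref{Thm:Csajbok} — should close the gap. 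Since the overlap between hyperplanes through a fixed point is only of order $1/q$, I expect this sub-case to be borderline and to require the kind of polynomial-method sharpening used by Csajb\'ok, Longobardi, Marino and Trombetti; this is the most technical step.

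\textbf{Step 2 (the dimension reduction $n=3 \Rightarrow$ all $n$).} Granting the weight equality and assuming Conjecture \ref{Conj} in dimension $n-1 \geq 3$, let $c$ be a minimum weight codeword of $\mc_1(n,q)^\perp$ with support $S$. Pushing the Step 1 counting to equality should force every hyperplane $\Pi$ to either miss $S$ or satisfy $|S\cap\Pi| = d_{n-1}$ with $c|_\Pi$ a \emph{minimum} weight codeword of $\mc_1(n-1,q)^\perp$; by the inductive hypothesis each such $c|_\Pi$ is a cylinder codeword, with an $(n-4)$-space vertex $\rho_\Pi$ and a plane base. The remaining task is to glue these hyperplane-cylinders into one global cylinder. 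The mechanism: for two hyperplanes $\Pi_1,\Pi_2$ with non-zero sections, the common restriction $c|_{\Pi_1\cap\Pi_2}$ carries two cylinder structures, one induced from each side; using that the cylinder structure of a minimum weight codeword is essentially unique (the vertex being recoverable from the support), these agree, which controls how $\rho_{\Pi_1}$ and $\rho_{\Pi_2}$ meet $\Pi_1\cap\Pi_2$. Running this over enough hyperplanes and chasing dimensions yields a common $(n-3)$-space $\rho$ with $\rho\cap\Pi = \rho_\Pi$ for every relevant $\Pi$; quotienting by $\rho$ collapses the whole configuration onto a single plane, where by the weight equality what remains is a minimum weight codeword of $\mc_1(2,q)^\perp$, exhibiting $c$ as a cylinder codeword $\Cyl_\rho(\cdot)$. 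The base case $n=3$ — every minimum weight codeword of $\mc_1(3,q)^\perp$ is a cylinder with a point vertex over a minimum weight codeword of $\mc_1(2,q)^\perp$ — is left as a hypothesis here; essentially all the remaining difficulty of the conjecture is concentrated there.

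\textbf{Step 3 ($q$ even).} Here $\fp = \FF_2$, so a codeword of $\mc_1(n,q)^\perp$ is the indicator function of its support and the codewords are precisely the \emph{even sets}: point sets meeting every line in an even number of points. By Theorem \ref{Thm:CKdR} the minimum weight is $q^{n-2}(q+2)$, and for $n=2$ it is $q+2$. A minimum even set in $\pg(2,q)$ with $q$ even must be a hyperoval: fixing a point $P$ of it, the $q+1$ lines through $P$ each meet it in an even, hence at least $2$, points, and since they partition the remaining $q+1$ points of the set, each is exactly $2$-secant; as this holds for every $P$, the set is a $(q+2)$-arc, i.e.\ a hyperoval. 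Thus the $q$ even instance of Conjecture \ref{Conj} reads: \emph{every even set of size $q^{n-2}(q+2)$ in $\pg(n,q)$ is a cylinder with a hyperoval as base} — equivalently, the classification of the smallest even sets. Now I would rerun Steps 1 and 2 with the extra input that a minimum even set in a plane is a hyperoval, hence meets every line in $0$ or $2$ points; this rigidity makes all the gluing clean — every plane section of a minimum even set in $\pg(3,q)$ is a hyperoval, two such sections on a common line share $0$ or $2$ of its points compatibly, and classical facts about cones over hyperovals then locate the vertex point, settling the base case $n=3$ for even $q$ and with it the conjecture in characteristic $2$.

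\textbf{The main obstacle.} Steps 1 and 2 are finite-geometry bookkeeping — technical, but I expect them to go through for all $q$. The genuine wall is the base case in full generality: the minimum weight of $\mc_1(2,q)^\perp$, let alone the structure of its minimum weight codewords, is unknown when $q$ is neither prime nor even, and Theorem \ref{Thm:p^2} lists essentially everything that is known. Consequently this plan can only deliver the unconditional weight equality, the reduction ``$n=3 \Rightarrow$ all $n$'', and a complete proof of Conjecture \ref{Conj} for $q$ even, where the $n=2$ base case is available; the conjecture in general must wait on progress in the plane.
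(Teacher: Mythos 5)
Your plan correctly identifies the three partial results the problem splits into and rightly concedes that the full conjecture is blocked by the plane case, but each of your three steps has a concrete gap. In Step 1 you have the difficulty inverted: the case where \emph{every} hyperplane meets $S$ is the trivial one --- a one-line double count of point--hyperplane incidences already gives $\wt(c) \geq \frac{\theta_n}{\theta_{n-1}} q^{n-3} d_2 > q^{n-2} d_2$ --- whereas the case where some hyperplane misses $S$ is where all the work lies. There is no reason to expect "rigidity of affine lines" to settle it; what actually closes this case is a variance-type bound on incidence-free sets (Haemers' eigenvalue bound for $2$-designs): one produces $\theta_{n-1}+1$ support points (one per line through a fixed support point) and $\theta_{n-1}+1$ skew $0$-secant hyperplanes (one extra per $(n-2)$-space of the empty hyperplane) and derives a numerical contradiction. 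No polynomial method is needed, and the Csajb\'ok--Longobardi--Marino--Trombetti bound comes out as a corollary rather than serving as an input. Your sketch supplies no mechanism at all for this, the genuinely hard, sub-case.

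In Step 2 the dichotomy you propose to force --- every hyperplane meets $S$ in $0$ or exactly $d_{n-1}$ points --- is false even for the cylinder codewords you are trying to characterise: a hyperplane containing the vertex $\rho$ meets the base plane in a line $\ell$ and hence meets the support in $q^{n-2}\,|\ell \cap \supp(v)|$ points, which is $2q^{n-2} > q^{n-3} d_2$ when $\ell$ is a bisecant of the base. So the gluing of hyperplane-cylinders cannot start as described; the working argument instead fixes a single plane $\pi$ with $|S \cap \pi| = d_2$, shows every $3$-space through $\pi$ restricts to a cylinder codeword, and proves that the resulting point-vertices form an $(n-3)$-space. Finally, Step 3 assumes away the entire difficulty: the assertion that every plane section of a minimum even set in $\pg(3,q)$ is a hyperoval is both false (planes through the vertex of a hyperoval cylinder meet it in $0$ or $2q$ points) and, in the form you would need it, essentially equivalent to what must be proven. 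The real content of the characteristic-$2$ case is showing that an even set of size $q(q+2)$ in $\pg(3,q)$ admits a $q$-secant line; this requires introducing the minimal large-secant size $m$, KM-arcs, and a variance inequality on plane intersection numbers to successively force $m > \sqrt{2q}$ and then $m = q$. No "classical facts about cones over hyperovals" substitute for that argument.
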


In this paper, we make three important steps towards resolving the conjecture.
These steps are discussed in detail in the following subsection, but we summarize them here.
\begin{enumerate}
 \item We show that the minimum weight of $\mc_1(n,q)^\perp$ matches the weight of cylinder codewords.
 In combination with Theorem \ref{Thm:LSVdV}, this completely reduces the determination of $d(\mc_k(n,q)^\perp)$ to determining $d(\mc_1(2,q)^\perp)$.
 \item We prove that if Conjecture \ref{Conj} is true for $n=3$, then it is true for all $n$.
 \item We show that Conjecture \ref{Conj} is true if $q$ is even.
\end{enumerate}

Note that Theorem \ref{Thm:prime} shows that Conjecture \ref{Conj} is true if $q$ is prime.

\subsection{New results}

\subsubsection{Cylinder codewords have minimum weight}

Note that if $c$ is a minimum weight codeword of $\mc_1(2,q)^\perp$, and we apply the cylinder construction in $\pg(n,q)$, then $\Cyl_\rho(c)$ has weight $q^{n-2} d(\mc_1(2,q)^\perp)$.
We show that this is the true minimum weight of $\mc_1(n,q)^\perp$.

\begin{thm}
 \label{Thm:Main:Wt}
 If $n > 2$, then
\[
 d(\mc_1(n,q)^\perp) = q^{n-2}d(\mc_1(2,q)^\perp).
\]
\end{thm}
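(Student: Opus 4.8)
The upper bound is immediate: taking a minimum weight codeword $c$ of $\mc_1(2,q)^\perp$ and applying Proposition \ref{Prop:Cylinder} with any $(n-3)$-space $\rho$ disjoint from the host plane $\pi$ yields a codeword of $\mc_1(n,q)^\perp$ of weight $q^{n-2}d(\mc_1(2,q)^\perp)$. So the content is the lower bound $d(\mc_1(n,q)^\perp) \geq q^{n-2}d(\mc_1(2,q)^\perp)$, and by induction on $n$ it suffices to prove $d(\mc_1(n,q)^\perp) \geq q \cdot d(\mc_1(n-1,q)^\perp)$ for $n > 2$. So fix a nonzero $c \in \mc_1(n,q)^\perp$ of minimum weight and let $S = \supp(c)$. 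The plan is to find a single point $P$ of $\pg(n,q)$ and project from $P$ onto a hyperplane $\Pi$ (i.e.\ send each point $Q \neq P$ to $\langle P, Q\rangle \cap \Pi$), in such a way that the projection of $c$ — summing the values of $c$ along each line through $P$ — is again a nonzero codeword of $\mc_1(n-1,q)^\perp$, and then bound $\wt(c)$ from below in terms of the weight of that projected codeword.

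First I would record the key local property of a dual codeword: for every line $\ell$, $\sum_{Q \in \ell} c(Q) = 0$ (this is exactly the orthogonality to the rows of the incidence matrix, and it is the $n=2$ analogue of the "every line meets $S$ evenly" condition in characteristic $2$). Define, for a point $P \notin S$ and a hyperplane $\Pi \not\ni P$, the projected function $\bar c_P : \Pi \to \fp$ by $\bar c_P(R) = \sum_{Q \in \langle P, R\rangle,\, Q \neq P} c(Q) = \sum_{Q \in \langle P,R\rangle} c(Q)$ (the two sums agree since $c(P) = 0$). Because every line of $\Pi$ together with $P$ spans a plane, and every line of that plane through $P$ contributes $0$ to the plane-sum in two ways, a short double-counting argument shows $\bar c_P$ is orthogonal to every line of $\Pi$, i.e.\ $\bar c_P \in \mc_1(n-1,q)^\perp$. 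Moreover $\supp(\bar c_P)$ is contained in the projection of $S$, and for each $R \in \supp(\bar c_P)$ the line $\langle P, R\rangle$ meets $S$ in at least one point, while for $R \notin \supp(\bar c_P)$ the line $\langle P,R\rangle$ meets $S$ either in zero points or in at least two points. Counting incidences between $S$ and the $(q^{n-1}-1)/(q-1)$ lines through $P$ gives
\[
 \wt(c) = |S| \;\geq\; \wt(\bar c_P) + 2\bigl(\#\{\text{lines through } P \text{ that are} \geq 2\text{-secant to } S\}\bigr),
\]
so if we can choose $P \notin S$ with $\bar c_P \neq \zero$, we already get $\wt(c) \geq \wt(\bar c_P) \geq d(\mc_1(n-1,q)^\perp)$, which is far too weak — we need the factor $q$.

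To recover the factor $q$, the right move is to argue that for a generic choice of $P$, the projected codeword $\bar c_P$ is nonzero \emph{and} every fibre over $\supp(\bar c_P)$ is "full" enough; more precisely, the plan is to show that the lines through $P$ that meet $S$ are \emph{$q$-to-one} onto $\supp(\bar c_P)$ in an averaged sense. Concretely: sum $\wt(\bar c_P)$ over all $P \notin S$, exchange the order of summation, and relate $\sum_{P \notin S}\wt(\bar c_P)$ to $\sum_{\ell}(\text{something about } |\ell \cap S|)$ over all lines $\ell$; combined with the constraint that every line meets $S$ in $0$ or $\geq 2$ points (so the "defect" $|\ell \cap S|$ is never exactly $1$), a convexity/averaging estimate should force the existence of a point $P \notin S$ with $\wt(\bar c_P) \leq \wt(c)/q$, hence $\wt(c) \geq q\,\wt(\bar c_P) \geq q\, d(\mc_1(n-1,q)^\perp)$ provided $\bar c_P \neq \zero$. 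The nonvanishing of $\bar c_P$ for at least one admissible $P$ is itself something to check — if $\bar c_P = \zero$ for \emph{every} $P \notin S$, then $c$ sums to zero on every line not entirely avoiding... ; ruling this out (e.g.\ via the fact that $\mc_1(n,q)$ and $\mc_1(n,q)^\perp$ are not orthogonal to each other when $p \mid \binom{\text{something}}{}$, or by a direct counting argument using $|S| < 2q^{n-1}$) is, I expect, the main obstacle and the place where the disjointness structure of cylinders must secretly be used. Once the averaging lemma and the nonvanishing claim are in place, the induction closes and Theorem \ref{Thm:Main:Wt} follows, with the base case $n = 2$ being vacuous.
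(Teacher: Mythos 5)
Your upper bound and the reduction to proving $d(\mc_1(n,q)^\perp) \geq q\, d(\mc_1(n-1,q)^\perp)$ by induction are fine, but the projection construction at the heart of your lower bound is vacuous. You define $\bar c_P(R) = \sum_{Q \in \vspan{P,R}} c(Q)$; since $\vspan{P,R}$ is a line and $c \in \mc_1(n,q)^\perp$, this sum is $0$ by the very property you record in your first paragraph. So $\bar c_P \equiv \zero$ for \emph{every} $P \notin S$, and the ``nonvanishing claim'' you flag as the main remaining obstacle is not an obstacle but an impossibility. (Taking $P \in S$ instead gives the constant function $-c(P)$, which is not in the dual code of the hyperplane either.) Moreover, even if you replace the algebraic projection by a set-theoretic one, the averaging step cannot work: you would need a point $P \notin S$ whose secant lines are, on average, $q$-secant to $S$ (so that $\wt(c) \geq q\,\wt(\bar c_P)$ with $\bar c_P \neq \zero$), and already for the extremal cylinder codewords no such point exists --- the only point all of whose secant lines are $q$-secant is the vertex, where every fibre sum is $q \cdot v(\cdot) = 0$, while from any other exterior point the support projects onto a set of size on the order of $q\,d(\mc_1(2,q)^\perp)$, not $d(\mc_1(2,q)^\perp)$.

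For comparison, the paper's argument avoids projections entirely. Assuming $\wt(c) < q^{n-2}d_2$, it uses Lemma \ref{Lm:Restriction} (every hyperplane meets $\supp(c)$ in $0$ or at least $d(\mc_1(n-1,q)^\perp) = q^{n-3}d_2$ points, by induction) to produce first one $0$-secant hyperplane, and then, through each $(n-2)$-space of it, a second one, yielding $\theta_{n-1}+1$ hyperplanes disjoint from $\supp(c)$; on the other side, picking one further support point on each of the $\theta_{n-1}$ lines through a fixed support point yields $\theta_{n-1}+1$ points of $\supp(c)$. These two families are incidence-free, and the Haemers bound (Theorem \ref{Thm:Haemers}) then gives $\theta_{n-1}+1 \leq q^{(n-1)/2}(q-1)$, which is false for $n \geq 3$. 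If you want to salvage a projection-flavoured proof, you would have to work with the support as a point set (e.g.\ counting secant lines through a point) rather than with fibre sums of $c$, but some global counting input such as Theorem \ref{Thm:Haemers} still seems to be needed to rule out weights strictly between $q\cdot q^{n-3}d_2$ and $q^{n-2}d_2$ coming from unbalanced fibres.
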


Using Theorem \ref{Thm:LSVdV}, we immediately obtain the following corollary.

\begin{crl}
 \label{Crl:Main:1}
If $n>2$, then
\[
 d(\mc_k(n,q)^\perp) = q^{n-k-1}d(\mc_1(2,q)^\perp)
\]
\end{crl}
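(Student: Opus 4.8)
The plan is to obtain the corollary by chaining Theorem \ref{Thm:Main:Wt} with Theorem \ref{Thm:LSVdV}, splitting off the case $k=1$ from the case $k\geq 2$. Throughout, $k$ ranges over $1\leq k\leq n-1$, since a $k$-space in $\pg(n,q)$ requires $k\leq n-1$.

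For $k=1$ there is nothing to do: the identity to be proved reads $d(\mc_1(n,q)^\perp)=q^{n-2}d(\mc_1(2,q)^\perp)$, which is exactly Theorem \ref{Thm:Main:Wt} (note $q^{n-k-1}=q^{n-2}$ in this case). For $k\geq 2$, I would first apply Theorem \ref{Thm:LSVdV}, which gives
\[
 d(\mc_k(n,q)^\perp)=d(\mc_1(n-k+1,q)^\perp).
\]
Set $m=n-k+1$; since $2\leq k\leq n-1$ we have $2\leq m\leq n-1$. If $m>2$, then applying Theorem \ref{Thm:Main:Wt} to $\pg(m,q)$ yields $d(\mc_1(m,q)^\perp)=q^{m-2}d(\mc_1(2,q)^\perp)=q^{n-k-1}d(\mc_1(2,q)^\perp)$, which is the claimed value. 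If $m=2$, i.e.\ $k=n-1$, then Theorem \ref{Thm:LSVdV} already gives $d(\mc_{n-1}(n,q)^\perp)=d(\mc_1(2,q)^\perp)$, and this agrees with the claimed formula because $q^{n-k-1}=q^{0}=1$.

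The only thing requiring a moment of care is the boundary case $k=n-1$: there Theorem \ref{Thm:Main:Wt} is not applicable (its hypothesis $n>2$ fails for the base space $\pg(2,q)$), so one simply observes that the exponent $n-k-1$ vanishes and the formula degenerates correctly. There is no real obstacle here — the argument is pure bookkeeping of indices on top of the two cited results, with Theorem \ref{Thm:Main:Wt} doing all the substantive work.
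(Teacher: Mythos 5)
Your argument is correct and is exactly the paper's route: the corollary is stated as an immediate consequence of combining Theorem \ref{Thm:LSVdV} with Theorem \ref{Thm:Main:Wt}, and your index bookkeeping (including the boundary case $k=n-1$, where $m=2$ and the formula degenerates to $d(\mc_1(2,q)^\perp)$) is the intended, if unstated, verification.
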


Note in particular that Theorem \ref{Thm:Main:Wt} in combination with the Bagchi-Inamdar (Theorem \ref{Thm:BI}) bound for $n=2$ recovers Theorem \ref{Thm:Csajbok}.
Moreover, since it is a well-known fact that if $q$ is even, then $d(\mc_1(2,q)^\perp) = q+2$ (as we will see later), Theorem \ref{Thm:Main:Wt} also reproves Theorem \ref{Thm:CKdR}.
Both Theorem \ref{Thm:BI} and Theorem \ref{Thm:Main:Wt} can be proven with elementary tools that only rely on the combinatorics of projective spaces, not the underlying algebra.
Hence, we obtain alternative proofs of Theorem \ref{Thm:Csajbok} and Theorem \ref{Thm:CKdR} that avoid the use of polynomials and advanced methods.

Moreover, Theorem \ref{Thm:p^2} and Corollary \ref{Crl:Main:1} imply the following.

\begin{crl}
 \begin{enumerate}
  \item $d(\mc_k(n,9)^\perp) = 9^{n-k-1} \cdot 15$
  \item $d(\mc_k(n,25)^\perp) = 25^{n-k-1} \cdot 45$
  \item If $p \geq 7$ is prime, then $d(\mc_k(n,p^2)^\perp) \geq p^{2(n-k-1)} (2p^2-2p+5)$.
 \end{enumerate}
\end{crl}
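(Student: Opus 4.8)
The final statement is the corollary combining Theorem \ref{Thm:p^2} with Corollary \ref{Crl:Main:1}, so the plan is essentially to chase the substitutions.

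\medskip

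The plan is to simply combine Corollary \ref{Crl:Main:1} with the three parts of Theorem \ref{Thm:p^2}, in each case specializing $q$ to the relevant square of a prime. First I would note that for $k > 1$, Corollary \ref{Crl:Main:1} reads $d(\mc_k(n,q)^\perp) = q^{n-k-1} d(\mc_1(2,q)^\perp)$, and I should check that this formula is also valid when $k=1$: indeed for $k=1$ it just becomes $d(\mc_1(n,q)^\perp) = q^{n-2} d(\mc_1(2,q)^\perp)$, which is exactly Theorem \ref{Thm:Main:Wt}, so Corollary \ref{Crl:Main:1} holds for all $k \geq 1$ and $n > 2$ (and trivially for $n=2$, $k=1$). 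Thus the reduction to $\pg(2,q)$ is legitimate in every case the corollary asserts.

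\medskip

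For part (1), substitute $q = 9$: Corollary \ref{Crl:Main:1} gives $d(\mc_k(n,9)^\perp) = 9^{n-k-1} d(\mc_1(2,9)^\perp)$, and Theorem \ref{Thm:p^2}(1) gives $d(\mc_1(2,9)^\perp) = 15$, yielding $9^{n-k-1} \cdot 15$. For part (2), substitute $q = 25$ and use Theorem \ref{Thm:p^2}(2), $d(\mc_1(2,25)^\perp) = 45$, to get $25^{n-k-1} \cdot 45$. For part (3), fix a prime $p \geq 7$ and substitute $q = p^2$; since $q^{n-k-1} = p^{2(n-k-1)}$, Corollary \ref{Crl:Main:1} gives $d(\mc_k(n,p^2)^\perp) = p^{2(n-k-1)} d(\mc_1(2,p^2)^\perp)$, and Theorem \ref{Thm:p^2}(3) gives the lower bound $d(\mc_1(2,p^2)^\perp) \geq 2p^2 - 2p + 5$, so $d(\mc_k(n,p^2)^\perp) \geq p^{2(n-k-1)}(2p^2-2p+5)$. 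In the first two cases we get an exact value because Theorem \ref{Thm:p^2}(1)--(2) are equalities; in the third we only get a lower bound because Theorem \ref{Thm:p^2}(3) is only a lower bound.

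\medskip

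There is essentially no obstacle here: the only thing to be a little careful about is the range of $n$ and $k$ for which the conclusion is claimed, namely $n > 2$ and the implicit $1 \leq k \leq n-1$ so that $k$-spaces exist and $(n-k+1) \geq 2$; Corollary \ref{Crl:Main:1} as stated covers exactly this, via Theorem \ref{Thm:LSVdV} for $k > 1$ and Theorem \ref{Thm:Main:Wt} for $k = 1$. So the proof is a one-line citation chain, and the ``hard part'' — proving Theorem \ref{Thm:Main:Wt} and hence Corollary \ref{Crl:Main:1} — has already been done earlier in the paper.
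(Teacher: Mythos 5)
Your proposal is correct and matches the paper exactly: the corollary is stated as an immediate consequence of Theorem \ref{Thm:p^2} and Corollary \ref{Crl:Main:1}, obtained by the same substitution of $q = 9$, $25$, $p^2$ into $d(\mc_k(n,q)^\perp) = q^{n-k-1}d(\mc_1(2,q)^\perp)$. Your extra remark that the $k=1$ case is covered by Theorem \ref{Thm:Main:Wt} is a worthwhile sanity check and is consistent with how the paper uses these results.
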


\subsubsection{Reduction to three dimensions}

We will reduce the resolution of Conjecture \ref{Conj} to the 3-dimensional case.

\begin{thm}
 \label{Thm:Main:3 dim}
 Let $q$ be a prime power.
 Suppose that all minimum weight codewords of $\mc_1(3,q)^\perp$ are cylinder codewords.
 Then for any $n \geq 3$, all minimum weight codewords of $\mc_1(n,q)^\perp$ are cylinder codewords.
\end{thm}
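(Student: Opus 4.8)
The plan is to induct on $n$, with the base case $n = 3$ being the hypothesis. So assume $n \geq 4$ and that the conjecture holds for $\mc_1(n-1,q)^\perp$; we want to show every minimum weight codeword $c$ of $\mc_1(n,q)^\perp$ is a cylinder codeword. By Theorem \ref{Thm:Main:Wt}, $\wt(c) = q^{n-2} d(\mc_1(2,q)^\perp)$. The key structural fact to exploit is how $c$ restricts to hyperplanes. For a hyperplane $\Pi$, the restriction $c|_\Pi$ lies in $\mc_1(n-1,q)^\perp$ (restricting a codeword of the dual code to a hyperplane yields a codeword of the dual code there, since every line of $\Pi$ is a line of $\pg(n,q)$). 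If $c|_\Pi \neq \zero$ then $\wt(c|_\Pi) \geq d(\mc_1(n-1,q)^\perp) = q^{n-3}d(\mc_1(2,q)^\perp)$ by Theorem \ref{Thm:Main:Wt} again. Summing $\wt(c|_\Pi)$ over the $(q^n-1)/(q-1)$ hyperplanes $\Pi$, each point of $\supp(c)$ is counted $(q^{n-1}-1)/(q-1)$ times, giving $\sum_\Pi \wt(c|_\Pi) = \frac{q^{n-1}-1}{q-1}\wt(c)$. A counting/averaging argument should then force that $c|_\Pi$ has weight \emph{exactly} $q^{n-3}d(\mc_1(2,q)^\perp)$ — i.e.\ is a minimum weight codeword of $\mc_1(n-1,q)^\perp$ — for all but a controlled set of hyperplanes, and that the number of hyperplanes $\Pi$ with $c|_\Pi = \zero$ is exactly what one gets for a cylinder codeword (namely, those hyperplanes containing the vertex, all counted correctly).

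Next I would use the induction hypothesis: on each hyperplane $\Pi$ for which $c|_\Pi$ is a minimum weight codeword, $c|_\Pi = \Cyl_{\rho_\Pi}(c_\Pi)$ for some $(n-4)$-dimensional vertex $\rho_\Pi \subseteq \Pi$ and some minimum weight codeword $c_\Pi$ of $\mc_1(2,q)^\perp$. The heart of the argument is to glue these vertices together: I expect to show that all the $\rho_\Pi$ span (or are contained in) a common $(n-3)$-space $\rho$ in $\pg(n,q)$, which will be the vertex of $c$. One way to see the constraint: a cylinder codeword on $\Pi$ is constant on each line through $\rho_\Pi$ (minus $\rho_\Pi$); so $c$ is constant on many lines, and two hyperplanes $\Pi, \Pi'$ meet in an $(n-2)$-space $\Pi \cap \Pi'$ on which both cylinder structures must agree, forcing $\rho_\Pi \cap (\Pi\cap\Pi')$ and $\rho_{\Pi'}\cap(\Pi\cap\Pi')$ to be compatible. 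Tracking these intersections across a well-chosen family of hyperplanes (e.g.\ a pencil, or all hyperplanes through a fixed $(n-2)$-space) should pin down a single $(n-3)$-space $\rho$ such that $\supp(c)$ is a union of lines through $\rho$ and $c$ is constant on each such line.

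Finally, once $c$ is known to be constant on the lines of a cylinder with vertex $\rho$, I would project from $\rho$ onto a complementary plane $\pi$: the induced function $\bar c$ on $\pi$ satisfies $c = \Cyl_\rho(\bar c)$ by construction, and one checks $\bar c \in \mc_1(2,q)^\perp$ — any line $\ell$ of $\pi$ spans with $\rho$ an $(n-2)$-space whose intersection pattern with $\supp(c)$, read off via a hyperplane through it, transfers the dual-code condition down to $\ell$. Since $\wt(c) = q^{n-2}\wt(\bar c)$ forces $\wt(\bar c) = d(\mc_1(2,q)^\perp)$, $\bar c$ is a minimum weight codeword and $c$ is a cylinder codeword, completing the induction. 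The main obstacle I anticipate is the gluing step: showing that the a priori unrelated vertices $\rho_\Pi$ on different hyperplanes are forced to be coherent, i.e.\ that there is a single global vertex $\rho$. This requires leveraging the overlap conditions on $(n-2)$-spaces $\Pi \cap \Pi'$ carefully, and ruling out "twisted" configurations where the cylinder direction varies from hyperplane to hyperplane — the counting from the first paragraph (pinning down exactly which hyperplanes vanish on $c$ and showing that set looks like "all hyperplanes through a fixed $(n-3)$-space") should be the lever that makes this rigid.
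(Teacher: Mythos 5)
Your outline has the right starting moves (restricting to hyperplanes, invoking minimality of the restrictions, appealing to the lower-dimensional hypothesis), but the proof is not actually carried out: the two load-bearing steps are only announced. First, the counting claim in your opening paragraph does not deliver what you want. For a genuine cylinder codeword $c=\Cyl_\rho(v)$ with $(n-3)$-dimensional vertex $\rho$, a hyperplane $\Pi$ \emph{containing} $\rho$ meets the base plane in a line $\ell$ and $\wt(c|_\Pi)=|\ell\cap\supp(v)|\cdot q^{n-2}$, which is typically $2q^{n-2}$; since $d(\mc_1(2,q)^\perp)<2q$ when $q$ is not prime, this strictly exceeds $q^{n-3}d(\mc_1(2,q)^\perp)$. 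So it is simply false that every hyperplane restriction of a minimum weight codeword is either zero or of minimum weight, and no averaging argument can force that conclusion. Second, and more seriously, the gluing of the hyperplane vertices $\rho_\Pi$ into a single $(n-3)$-space is the entire mathematical content of the theorem, and you explicitly leave it as ``the main obstacle I anticipate'' without giving an argument. A proof that ends where the difficulty begins is a gap, not a proof.

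For comparison, the paper avoids gluing $(n-4)$-dimensional vertices altogether. After handling $q$ prime separately (via the known classification), it uses $d_2:=d(\mc_1(2,q)^\perp)<2q$ and first produces a single plane $\pi$ with $\wt(c|_\pi)=d_2$. A counting argument shows every $3$-space $\sigma\supseteq\pi$ satisfies $|\sigma\cap\supp(c)|=qd_2$, so by the $n=3$ hypothesis each such $c|_\sigma$ is a cylinder codeword with a \emph{point} vertex. These point vertices admit an intrinsic characterisation (they are exactly the points $P\notin\pi$ with $\vspan{P,Q}\setminus\supp(c)=\{P\}$ for $Q\in\supp(c)\cap\pi$, which uses $d_2<2q$ to rule out extra intersection), from which one shows the set of all vertices is an $(n-3)$-space $\rho$ and that $c=\Cyl_\rho(c|_\pi)$. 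If you want to rescue your hyperplane-based route, you would need a comparably concrete, checkable description of the vertex that is stable across the pencil of hyperplanes; without it, the ``twisted configurations'' you mention are not excluded.
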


We note that Conjecture \ref{Conj} holds if $q$ is prime by Theorem \ref{Thm:prime}, so the above theorem trivially holds for prime values of $q$.
Moreover, it was already proven to hold for even values of $q$ by the author \cite[Proposition 3.9]{A3-2023-ANoteOn}.

\subsubsection{Even prime powers and even sets}

We extend Theorem \ref{Thm:CKdR} by characterising all minimum weight codewords of $\mc_1(n,q)^\perp$ in case $q$ is even.

\begin{thm}
 \label{Thm:Main:Even:Code}
 If $q$ is even, the minimum weight of $\mc_1(n,q)^\perp$ is $q^{n-2}(q+2)$, and the only minimum weight codewords are cylinder codewords.
\end{thm}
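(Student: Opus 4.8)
The plan is to reduce to dimension three and then classify the minimum weight codewords of $\mc_1(3,q)^\perp$ geometrically. Since $q$ is even, the prime field is $\FF_2$, so a codeword of $\mc_1(n,q)^\perp$ is the characteristic vector of a point set $S \subseteq \mp$, and lying in the dual code means exactly that every line meets $S$ in an even number of points, i.e.\ that $S$ is an \emph{even set}. For $n=2$ the statement is immediate: a nonempty even set of size $q+2$ has no $3$ collinear points (through a point of it, all $q+1$ lines must be $2$-secant), so it is a $(q+2)$-arc, i.e.\ a hyperoval, and hyperovals are cylinder codewords with empty vertex. For $n \geq 3$, Theorem \ref{Thm:Main:Wt} together with the classical fact $d(\mc_1(2,q)^\perp)=q+2$ already gives the minimum weight $q^{n-2}(q+2)$; and by Theorem \ref{Thm:Main:3 dim}, which is known to hold for even $q$, it suffices to prove that every even set $S$ of size $q(q+2)$ in $\pg(3,q)$ is a cone with a point vertex $v$ and a hyperoval base, with $v$ removed, as this set is precisely the support of the cylinder codeword $\Cyl_v(c)$ for $c$ the characteristic function of a hyperoval of $\pg(2,q)$.

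So fix such an $S$ in $\pg(3,q)$. First I would record the elementary constraints: every line meets $S$ in $0,2,\ldots,q$ points, and every plane meets $S$ in an even set of $\pg(2,q)$, hence in $0$ points or in at least $q+2$ points, with equality exactly when the section is a hyperoval. Then I would run a few double counts. Through a point $R \in S$ one gets $\sum_{\ell \ni R}(|S \cap \ell| - 2) = q-2$ and $\sum_{\pi \ni R}(|S \cap \pi| - (q+2)) = (q-2)(q+1)$, whence at most $(q-2)/2$ lines through $R$ fail to be $2$-secant and at most $(q-2)(q+1)/2$ planes through $R$ fail to be hyperoval planes; in particular hyperoval planes exist. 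Fixing a hyperoval plane $\pi$ with $\mo = S \cap \pi$, the affine remainder $S' = S \setminus \mo$ lies off $\pi$ and has size $(q+2)(q-1)$; analysing, for each $P \in \mo$, the pencil of $q^2$ lines through $P$ not lying in $\pi$ --- each carrying an odd, hence positive, number of points of $S'$ --- pins down $S'$ tightly, and the same estimates, reread as statements about the projection of $S$ from a point of $S$, show that the "excess multiplicity" of that projection has total mass only $q-2$. This concentration is what makes the vertex almost forced.

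It then remains to conclude: recover the vertex $v$ (as the common point of the $q$-secants of $S$, or equivalently from the concentrated excess of the projections), show $v \notin S$ and that every line through $v$ is external or $q$-secant, deduce that $S \cup \{v\}$ is a cone with vertex $v$, and finally cut with a plane avoiding $v$ to see that the base is a $(q+2)$-arc, i.e.\ a hyperoval. The step I expect to be the main obstacle is this last, rigidity part: turning "approximately a cone" into "exactly a cone." It requires showing that $S$ has a $q$-secant at all, that all $q$-secants are concurrent, and ruling out the degenerate alternatives --- notably a cone over a non-arc even set of $\pg(2,q)$ --- which the counting inequalities do not immediately exclude and which are essentially tight for the smallest even $q$. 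I therefore anticipate treating small $q$ separately (at least $q \in \{2,4\}$, possibly $q=8$) by invoking the known classification of minimum weight codewords of $\mc_1(n,q)^\perp$ for $q \leq 8$ even, and carrying out the rigidity argument only once $q$ is large enough for the estimates to have slack.
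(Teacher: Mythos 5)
Your reduction steps are all sound and coincide with the paper's: identifying dual codewords with even sets, the plane case (a $(q+2)$-set with no $3$-secant line is a hyperoval), obtaining the minimum weight from Theorem \ref{Thm:Main:Wt} together with $d(\mc_1(2,q)^\perp)=q+2$, and reducing the classification to $\pg(3,q)$ via Theorem \ref{Thm:Main:3 dim}. But the heart of the theorem is exactly the step you defer --- proving that an even set $S$ of size $q(q+2)$ in $\pg(3,q)$ admits a $q$-secant line at all --- and your sketch contains no argument for it. The double counts you propose (total excess $\sum_{\ell \ni R}(|S\cap\ell|-2)=q-2$ through each point $R\in S$) only say that the large secants through a point carry total excess $q-2$; this is equally consistent with each point lying on, say, one $(q/2)$-secant and one $(q/2+2)$-secant, or on several $4$-secants, so ``the vertex is almost forced'' does not follow from them. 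The paper's proof is devoted almost entirely to closing this gap: it sets $m>2$ to be the smallest size of a large secant, shows that the minimal nonempty plane sections are KM-arcs of type $m$ (whence $m$ is a power of $2$ and the $0$-secant planes can be counted), derives the second-moment inequality $\sum_{i>0}(i-(q+2))(i-(q+m))n_i \leq (q+2)\left(qm^2-(2q-1)m-q\left(\tfrac52 q-1\right)\right)$ over plane intersection numbers, deduces $m>\sqrt{2q}$, and then, assuming $m<q$, uses the $2$-secant lines to produce enough planes meeting $S$ in at least $q+2m-2$ points to contradict that inequality; a separate divisibility argument is even needed to exclude $m=q/2$. Nothing in your outline substitutes for this quantitative core.

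Two smaller remarks. Once a $q$-secant is known to exist, the paper simply invokes \cite[Proposition 3.13]{A3-2023-ANoteOn}, which says that a single $q$-secant already forces $S$ to be a hyperoval cylinder; your plan to prove concurrency of all $q$-secants and the cone structure from scratch is plausible but is additional unexecuted work. And your fallback of citing the known classification for small even $q$ is legitimate for $q\leq 8$, but it does not help for general even $q$, where the argument above is indispensable.
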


If $q$ is even, then any codeword $c$ of $\mc_k(n,q)^\perp$ is a function $\mp \to \FF_2 = \{0,1\}$, and we can identify $c$ with its support.
This brings us to the following definition.

\begin{df}
 We call a non-empty set $S$ of points in $\pg(n,q)$ an \emph{even set}\footnote{sometimes called a \emph{set of even type}} with respect to $k$-spaces if $S$ intersects every $k$-space in an even number of points.
 If $S$ is an even set with respect to lines, we simply call $S$ an even set.
\end{df}

Note that if $\mp$ is the set of points of $\pg(n,q)$, then a non-zero function $c:\mp \to \FF_2$ is a codeword of $\mc_k(n,q)^\perp$ if and only if $\supp(c)$ is an even set with respect to $k$-spaces.

If $S$ is an even set in $\pg(2,q)$, and $P \in S$, then each of the $q$ lines through $P$ contain at least one other point of $S$, hence $|S| \geq q+2$.

\begin{df}
An even set in $\pg(2,q)$ of size $q+2$ is called a \emph{hyperoval}.
It is known that whenever $q$ is even, $\pg(2,q)$ contains hyperovals, see e.g.\ \cite[Theorem 6.9]{KissSzonyi}.
\end{df}

Therefore, \ref{Thm:Main:Even:Code} yields the following equivalent formulation in the language of even sets.

\begin{thm}
 \label{Thm:Main:Even:Set}
 If $q$ is even, then the smallest even sets in $\pg(n,q)$ have size $q^{n-2}(q+2)$, and they are exactly the cylinders with an $(n-3)$-dimensional vertex and hyperoval as base.
\end{thm}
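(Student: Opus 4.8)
The plan is to derive Theorem \ref{Thm:Main:Even:Set} as a direct translation of Theorem \ref{Thm:Main:Even:Code}, so almost all of the work lies in proving the latter; here I only sketch how the dictionary between the two statements is set up and what the core combinatorial argument for the code statement should look like. Recall that for $q$ even a codeword of $\mc_1(n,q)^\perp$ is (up to identifying $\fp = \FF_2$ with $\{0,1\}$) the same thing as the indicator function of an even set in $\pg(n,q)$, and its weight is the size of that set. Thus ``$d(\mc_1(n,q)^\perp) = q^{n-2}(q+2)$'' becomes ``the smallest even set has $q^{n-2}(q+2)$ points'', and ``every minimum weight codeword is a cylinder codeword'' becomes ``every smallest even set is $\Cyl_\rho(S_0)$ for some plane $\pi$, some $(n-3)$-space $\rho$ disjoint from $\pi$, and some minimum even set $S_0$ in $\pi$''. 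By the size bound $|S_0| \geq q+2$ recorded just before the definition of hyperoval, a minimum even set in $\pg(2,q)$ is precisely a hyperoval; and applying $\Cyl_\rho$ to the indicator of a hyperoval produces exactly the cylinder with vertex $\rho$ and hyperoval base, by the very definition of a cylinder. So the only genuine content beyond Theorem \ref{Thm:Main:Even:Code} is the observation that hyperovals exist for $q$ even (cited from \cite[Theorem 6.9]{KissSzonyi}), which guarantees the bound is attained. That is the whole reduction.

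For the substantive part---proving Theorem \ref{Thm:Main:Even:Code}, the classification of minimum weight codewords for $q$ even---I would proceed by induction on $n$, using the $n=2$ base case (hyperovals, which are classically known to be the unique minimum even sets by the pigeonhole argument above) and invoking Theorem \ref{Thm:Main:3 dim}, which reduces everything to the three-dimensional case. So the heart of the matter is: \emph{if $q$ is even, every even set in $\pg(3,q)$ of size $q(q+2)$ is a cylinder with a line vertex and hyperoval base}. Here I would fix such a minimum even set $S$ and study its intersections with the planes of $\pg(3,q)$. Each plane meets $S$ in an even set of that plane, so either in $0$ points or in at least $q+2$ points. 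A standard double-counting over planes through a fixed point, combined with $|S| = q(q+2)$, should force the plane intersection numbers to be very rigid---I would expect each plane to meet $S$ in $0$, $q+2$, or exactly $q(q+2)$... no: rather, to meet it in a controlled list of sizes, with the ``small'' secant planes cutting out hyperovals. Identifying a plane that meets $S$ in a hyperoval $H$, one then wants to show $S$ is the cone over $H$ with some point vertex, or iterate this to get a line vertex; the nucleus of the hyperoval (all tangent lines of a hyperoval in even characteristic pass through a common point---no, a hyperoval has no tangents; rather this is the conic-plus-nucleus structure) and the even-intersection condition on lines through points of $S$ should pin down the cylinder structure.

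The main obstacle I anticipate is exactly this three-dimensional classification step: going from ``$S$ has the right size and all plane sections are even'' to ``$S$ is a cylinder'' is not formal, because a priori the plane sections could be larger even sets (e.g. symmetric differences of conics, or other configurations) rather than hyperovals, and one must rule these out using the global size constraint $|S| = q(q+2)$. The key leverage is that $q(q+2)$ is just barely more than $q+2$: a point $P \in S$ lies on $q^2+q+1$ lines, each carrying at least one further point of $S$, and on $q^2+q+1$... no, $q+1$ planes through any line, etc., and these incidence counts are tight enough that the ``excess'' over a cylinder must vanish. Concretely I would count flags (point of $S$, line through it, plane through that line) and extract that the number of planes meeting $S$ in more than $q+2$ points is forced to be small, then show a plane $\sigma$ with $|S \cap \sigma| = q+2$ exists and that projecting $S$ from a suitable point onto $\sigma$ is injective on $S \setminus \{\text{vertex line}\}$, yielding the cone structure. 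Handling the characteristic-two arithmetic carefully (parities of line sections through the candidate vertex) is where the argument needs the most care, but it should go through because the numerology leaves no room for anything but the cylinder.
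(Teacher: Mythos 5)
Your reduction of the theorem to Theorem \ref{Thm:Main:Even:Code}, the dictionary between even sets and binary codewords, and the appeal to Theorem \ref{Thm:Main:3 dim} to reduce everything to $\pg(3,q)$ all match the paper exactly and are fine. The problem is that you have correctly located the heart of the matter --- classifying even sets of size $q(q+2)$ in $\pg(3,q)$ --- and then not actually proved it. Your sketch for that step consists of hopes (``should force'', ``should pin down'', ``the numerology leaves no room'') rather than an argument, and the specific route you gesture at (find a $(q+2)$-secant plane, then project $S$ from a suitable point onto it and show the projection is injective) has a concrete gap: you have no mechanism for producing the vertex point, no argument that such a projection is well defined or injective, and no way of excluding the genuinely dangerous intermediate configurations, namely plane sections that are KM-arcs of type $t$ with $2 < t < q$ (these are even sets of size $q+t$ that are neither hyperovals nor full cones, and they really exist). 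The ``excess over a cylinder must vanish'' intuition is false as stated: the first-moment flag counts you propose are consistent with many hypothetical intersection patterns, and ruling them out requires second-moment information.

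The paper's actual mechanism is quite different and worth knowing. It imports Proposition \ref{Prop:qsecant} (from the author's earlier work): if an even set of size $q(q+2)$ in $\pg(3,q)$ admits a single $q$-secant \emph{line}, it is already a hyperoval cylinder. The entire proof then reduces to showing that $m=q$, where $m$ is the smallest intersection size $>2$ of a line with $S$. This is done by a chain of counting lemmas: a plane section realizing an $m$-secant line must be a KM-arc of type $m$ (so $m$ is a power of $2$); the KM-arc forces many $0$-secant planes; a variance-type identity (the standard equations $\sum n_i$, $\sum i\,n_i$, $\sum i^2 n_i$ over plane intersection numbers) combined with the count of $0$-secant planes gives $m > \sqrt{2q}$; and a final analysis of $2$-secant lines and planes meeting $S$ in at least $q+2m-2$ points produces a numerical contradiction whenever $4 \leq m \leq q/4$. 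None of this second-moment machinery, nor the key reduction to ``exhibit one $q$-secant line'', appears in your proposal, so as written the central step of your proof is missing.
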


For completion, we can use Theorem \ref{Thm:LSVdV} to extend the previous theorem to even sets with respect to any dimension.

\begin{crl}
 \label{Crl:Main:2}
 If $q$ is even, then the smallest even sets in $\pg(n,q)$ with respect to $k$-spaces have size $q^{n-k-1}(q+2)$, and are exactly the cylinders with an $(n-k-2)$-dimensional vertex and a hyperoval as base.
\end{crl}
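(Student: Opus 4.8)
The plan is to obtain Corollary~\ref{Crl:Main:2} by combining Theorem~\ref{Thm:Main:Even:Set} with the reduction theorem of Lavrauw, Storme and Van de Voorde (Theorem~\ref{Thm:LSVdV}). Since $q$ is even, a non-empty point set is an even set with respect to $k$-spaces exactly when it is the support of a non-zero codeword of $\mc_k(n,q)^\perp$, so the smallest even sets with respect to $k$-spaces are precisely the supports of minimum weight codewords of $\mc_k(n,q)^\perp$. The case $k=1$ is exactly Theorem~\ref{Thm:Main:Even:Set}, so I would assume $2 \leq k \leq n-1$.

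For the weight, Theorem~\ref{Thm:LSVdV} gives $d(\mc_k(n,q)^\perp) = d(\mc_1(n-k+1,q)^\perp)$. If $n-k+1 \geq 3$, Theorem~\ref{Thm:Main:Even:Code} evaluates this as $q^{(n-k+1)-2}(q+2) = q^{n-k-1}(q+2)$; in the boundary case $k=n-1$, where $n-k+1 = 2$, the elementary bound on even sets in $\pg(2,q)$ together with the existence of hyperovals gives $d(\mc_1(2,q)^\perp) = q+2$, and since $n-k-1 = 0$ here this again equals $q^{n-k-1}(q+2)$. So in all cases the minimum weight is $q^{n-k-1}(q+2)$.

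For the classification, the ``moreover'' part of Theorem~\ref{Thm:LSVdV} says that a codeword $c$ of $\mc_k(n,q)^\perp$ has minimum weight if and only if there is an $(n-k+1)$-space $\pi$ and a minimum weight codeword $c_\pi$ of the dual code of points and lines in $\pi$ with $c$ vanishing outside $\pi$ and $\supp(c) = \supp(c_\pi)$. Identifying $\pi$ with $\pg(n-k+1,q)$, Theorem~\ref{Thm:Main:Even:Set} (when $n-k+1 \geq 3$) describes $\supp(c_\pi)$ as a cylinder inside $\pi$ with a vertex of projective dimension $(n-k+1)-3 = n-k-2$ and a hyperoval as base; in the boundary case $\supp(c_\pi)$ is a hyperoval, which is exactly a cylinder with empty (i.e.\ $(-1)$-dimensional) vertex, again of dimension $n-k-2$. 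A cylinder inside $\pi$ is also a cylinder in $\pg(n,q)$, so the smallest even sets with respect to $k$-spaces are exactly the cylinders of $\pg(n,q)$ with an $(n-k-2)$-dimensional vertex and a hyperoval as base. The reverse inclusion---that every such cylinder really is an even set with respect to $k$-spaces of this size---is already contained in the word ``exactly'' of Theorem~\ref{Thm:LSVdV}: such a cylinder spans the $(n-k+1)$-space $\pi' = \vspan{\rho,\sigma}$ (with $\rho$ the $(n-k-2)$-dimensional vertex and $\sigma$ the plane of the hyperoval base $S$), inside which it is the support of a minimum weight codeword of the dual point-line code of size $|S|\,q^{n-k-1} = (q+2)q^{n-k-1}$, hence extends by zero to a minimum weight codeword of $\mc_k(n,q)^\perp$.

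I do not expect a genuine obstacle here: all the substance is in Theorems~\ref{Thm:Main:Even:Set} and~\ref{Thm:LSVdV}. The only points requiring care are the bookkeeping of projective dimensions through the reduction $k \mapsto n-k+1$, and the degenerate case $k=n-1$, where the cylinder has an empty vertex and the base hyperoval already is the whole even set.
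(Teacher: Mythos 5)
Your proposal is correct and follows exactly the route the paper intends: the corollary is stated there as an immediate consequence of combining Theorem~\ref{Thm:LSVdV} with Theorem~\ref{Thm:Main:Even:Set} (plus the elementary fact that minimum weight codewords of $\mc_1(2,q)^\perp$ are hyperovals for the boundary case). Your careful tracking of the dimension shift $k \mapsto n-k+1$ and of the degenerate case $k=n-1$ is exactly the bookkeeping the paper leaves to the reader.
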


\begin{rmk}
 For any even prime power $q$, Corollary \ref{Crl:Main:2} classifies the smallest even sets with respect to $k$-spaces in projective spaces over $\fq$, in as far as the hyperovals of $\pg(2,q)$ are classified.
 This has currently been done for $q \leq 64$ \cite{Vandendriessche}, but is still open for larger values of $q$.
\end{rmk}

\subsection{Organisation of the paper and proof techniques}

The remainder of the paper is only concerned with proving Theorems \ref{Thm:Main:Wt}, \ref{Thm:Main:3 dim}, and \ref{Thm:Main:Even:Set}.
This will be done in Sections \ref{Sec:Main:Wt}, \ref{Sec:All?}, and \ref{Sec:Main:Even}, respectively.

Let us make a note on the proof techniques.
Several key steps use a variance-type argument.
One can think of this as follows: if $S$ is a set of points in $\pg(n,q)$, then the variance of the distribution of $S$ over the hyperplanes\footnote{or any more generally over the $k$-spaces for some $k \geq 1$} of $\pg(n,q)$ is upper bounded.
In other words, it is not possible that there are  many hyperplanes $\pi$ for which $|\pi \cap S|$ deviates much from $|S|/q$.

To prove Theorem \ref{Thm:Main:Wt}, we show that if $d(\mc_1(n,q)^\perp) < q^{n-2} d(\mc_1(2,q)^\perp)$, there are too many hyperplanes that don't intersect the support of a given minimum weight codeword, which yields a contradiction.
To prove Theorem \ref{Thm:Main:Even:Set}, we take an even set $S$ of size $q(q+2)$ in $\pg(3,q)$, and study the smallest number $m>2$ such that there exists an $m$-secant line to $S$.
In case $m < q$, we force there to be too many hyperplanes that either don't intersect $S$, or intersect $S$ in significantly more than $q+2$ points, which yields a contradiction.

\section{Proof of Theorem \texorpdfstring{\ref{Thm:Main:Wt}}{}}
 \label{Sec:Main:Wt}

We will use the notation
\[
 \theta_n = \frac{q^{n+1}-1}{q-1} = q^n + q^{n-1} + \dots + q + 1
\]
for the number of points in $\pg(n,q)$.
In order to prove Theorem \ref{Thm:Main:Wt}, we need two tools.

First of all we need a bound on so-called incidence-free sets in $\pg(n,q)$.
We apply the bound from Haemers \cite{haemers95}, which holds in general 2-designs, to the specific case of points and hyperplanes in $\pg(n,q)$.
This theorem can be proven using linear algebra, but also through an easy combinatorial method, cf.\ \cite[\S 2]{dewinterschillewaertverstraete}.

\begin{thm}[{\cite[Corollary 5.3]{haemers95}}]
 \label{Thm:Haemers}
 Suppose that $S$ is a set of points and $T$ is a set of hyperplanes in $\pg(n,q)$ such that no point of $S$ is incident to a hyperplane in $T$.
 Then
 \[
  \theta_{n-1}^2 |S| |T| \leq q^{n-1} (\theta_n - |S|) (\theta_n - |T|).
 \]
\end{thm}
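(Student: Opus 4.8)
The plan is to prove this by an elementary variance (second–moment) argument; this is the combinatorial shadow of Haemers' eigenvalue bound and fits the proof philosophy announced for the rest of the paper. I will use only that the point–hyperplane incidence structure of $\pg(n,q)$ is a symmetric $2$-design: there are $\theta_n$ points and $\theta_n$ hyperplanes, every point lies on $\theta_{n-1}$ hyperplanes, and any two distinct points lie on exactly $\theta_{n-2}$ common hyperplanes. We may assume $S \neq \emptyset$ and $T \neq \emptyset$, since otherwise the right-hand side of the claimed inequality is non-negative and there is nothing to prove.

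For each hyperplane $H$ set $a(H) = |H \cap S|$. Counting point–hyperplane flags whose point lies in $S$ gives $\sum_H a(H) = \theta_{n-1}|S|$, and counting ordered pairs of (not necessarily distinct) points of $S$ lying on a common hyperplane gives
\[
 \sum_H a(H)^2 = \theta_{n-1}|S| + \theta_{n-2}|S|(|S|-1) = q^{n-1}|S| + \theta_{n-2}|S|^2,
\]
using $\theta_{n-1}-\theta_{n-2}=q^{n-1}$. Writing $\mu = \theta_{n-1}|S|/\theta_n$ for the average of $a$ over all hyperplanes and subtracting off the mean, the identity $\theta_{n-1}^2 - \theta_{n-2}\theta_n = q^{n-1}$ (immediate from $\theta_m = (q^{m+1}-1)/(q-1)$) yields
\[
 \sum_H \bigl(a(H)-\mu\bigr)^2 = \sum_H a(H)^2 - \theta_n\mu^2 = q^{n-1}\,\frac{|S|\,(\theta_n - |S|)}{\theta_n}.
\]

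Now I bring in $T$. Since no point of $S$ is incident to any hyperplane of $T$, we have $a(H)=0$ for every $H \in T$, hence $\sum_{H\in T}(a(H)-\mu) = -|T|\mu$. The key move is to express this as an inner product against the \emph{centred} indicator $\chi_T - \tfrac{|T|}{\theta_n}\one$ of $T$ (with $\chi_T(H)=1$ for $H\in T$ and $0$ otherwise): since $a - \mu\one$ has zero average over the hyperplanes, the $\one$-component contributes nothing, so
\[
 -|T|\mu = \sum_{H\in T}\bigl(a(H)-\mu\bigr) = \sum_H \bigl(a(H)-\mu\bigr)\Bigl(\chi_T(H) - \tfrac{|T|}{\theta_n}\Bigr).
\]
Applying Cauchy–Schwarz to the right-hand side, together with $\sum_H(a(H)-\mu)^2 = q^{n-1}|S|(\theta_n-|S|)/\theta_n$ from above and the one-line computation $\sum_H(\chi_T(H) - |T|/\theta_n)^2 = |T|(\theta_n-|T|)/\theta_n$, we get $|T|\mu \leq \theta_n^{-1}\sqrt{q^{n-1}|S||T|(\theta_n-|S|)(\theta_n-|T|)}$; substituting $\mu = \theta_{n-1}|S|/\theta_n$, squaring, and cancelling $|S||T|$ gives exactly $\theta_{n-1}^2|S||T| \leq q^{n-1}(\theta_n-|S|)(\theta_n-|T|)$.

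I do not anticipate a genuine obstacle, but the crux of the argument — the only step that is not bookkeeping — is the choice to apply Cauchy–Schwarz against the centred indicator of $T$ rather than the bare indicator. The naive estimate $\bigl(\sum_{H\in T}(a(H)-\mu)\bigr)^2 \leq |T|\sum_H(a(H)-\mu)^2$ only yields the weaker bound $\theta_{n-1}^2|S||T| \leq q^{n-1}\theta_n(\theta_n-|S|)$; centring replaces one factor $\theta_n$ by $\theta_n-|T|$ and makes the estimate sharp. Conceptually this is the combinatorial counterpart of decomposing $\chi_S$ and $\chi_T$ along $\one$ and its orthogonal complement in the eigenvalue proof, where $NN^\top = q^{n-1}I + \theta_{n-2}J$ has non-principal eigenvalue $q^{n-1}$. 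Alternatively one could invoke the interlacing bound for symmetric $2$-designs as a black box, but the computation above keeps everything self-contained.
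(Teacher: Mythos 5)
Your proof is correct: the flag counts, the identity $\theta_{n-1}^2-\theta_{n-2}\theta_n=q^{n-1}$, the variance computation, and the Cauchy--Schwarz step against the centred indicator of $T$ all check out, and the centring is indeed what upgrades the factor $\theta_n$ to $\theta_n-|T|$. The paper does not prove this statement itself --- it cites it as \cite[Corollary 5.3]{haemers95} and remarks that an elementary combinatorial proof appears in \cite[\S 2]{dewinterschillewaertverstraete}; your second-moment argument is precisely that combinatorial proof, so it is a valid self-contained substitute for the citation.
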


Secondly, we need a simple but crucial observation.

\begin{lm}
 \label{Lm:Restriction}
 Suppose that $c \in \mc_1(n,q)^\perp$ and $\pi$ is a subspace of $\pg(n,q)$ of dimension $m \geq 2$.
 Then the restriction $c|_\pi$ of $c$ to the points of $\pi$ is a codeword of the dual code of points and lines in $\pi$.
 Therefore, $c|_\pi$ is either the zero codeword, or has weight at least $d(\mc_1(m,q)^\perp)$.
\end{lm}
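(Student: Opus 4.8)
The plan is to prove Lemma~\ref{Lm:Restriction} directly from the definitions, by showing that the restriction map sends codewords of $\mc_1(n,q)^\perp$ to codewords of the dual code of points and lines in $\pi$, after which the stated dichotomy follows immediately from the definition of minimum weight. First I would recall what it means for $c \in \mc_1(n,q)^\perp$: viewing $c$ as a function $\mp \to \fp$, the defining condition is that $\sum_{P \in \ell} c(P) = 0$ for every line $\ell$ of $\pg(n,q)$. This is simply the statement that $c$, as a vector, is orthogonal to every row of the line-vs-point incidence matrix $M$.

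Next I would observe that $\pi$, being a subspace of dimension $m \geq 2$, is itself a copy of $\pg(m,q)$, and that a line of $\pi$ is in particular a line of $\pg(n,q)$. Let $c|_\pi : \mp' \to \fp$ denote the restriction of $c$ to the point set $\mp'$ of $\pi$. For any line $\ell \subseteq \pi$ we have $\ell \subseteq \pg(n,q)$, so
\[
 \sum_{P \in \ell} c|_\pi(P) = \sum_{P \in \ell} c(P) = 0,
\]
the last equality because $c \in \mc_1(n,q)^\perp$. Since this holds for every line of $\pi$, by definition $c|_\pi$ lies in the kernel of the line-vs-point incidence matrix of $\pi \cong \pg(m,q)$, i.e.\ $c|_\pi \in \mc_1(m,q)^\perp$. (One needs $m \geq 2$ only so that $\pg(m,q)$ genuinely contains lines and the code $\mc_1(m,q)^\perp$ is the intended object; for $m = 1$ there would be nothing to say.)

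Finally, the conclusion is a one-line consequence of the definition of the minimum weight $d(\mc_1(m,q)^\perp)$: any element of $\mc_1(m,q)^\perp$ is either the zero codeword, or a non-zero codeword, in which case its weight is at least the minimum weight $d(\mc_1(m,q)^\perp)$ by definition. Applying this to $c|_\pi$ gives exactly the claimed statement.

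I do not expect any real obstacle here; the lemma is genuinely a bookkeeping observation, and the only point requiring a moment's care is to make explicit that lines internal to $\pi$ really are lines of the ambient space, so that the orthogonality conditions defining $\mc_1(n,q)^\perp$ restrict correctly. No variance argument or counting is needed, in contrast to the deeper results of the paper that invoke this lemma.
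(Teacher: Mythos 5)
Your proposal is correct and follows exactly the paper's own argument: the defining orthogonality conditions $\sum_{P \in \ell} c(P) = 0$ apply in particular to the lines contained in $\pi$, so $c|_\pi$ lies in the dual code of points and lines in $\pi$, and the weight dichotomy is immediate from the definition of minimum weight. No differences worth noting.
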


\begin{proof}
 A function $c:\mp \to \fp$ belongs to $\mc_1(n,q)^\perp$ if and only if $\sum_{P \in \ell} c(P) = 0$ for all lines $\ell$ of $\pg(n,q)$.
 This applies in particular to the lines of $\pi$, hence $c|_\pi$ is in the dual code of points and lines in $\pi$.
 The last sentence follows from the definition of minimum weight.
\end{proof}

We are now ready to prove the theorem.

\begin{proof}[Proof of Theorem \ref{Thm:Main:Wt}]
 We prove the theorem by induction on $n$.
 We use $n=2$ as base case, for the theorem statement is trivially true.
 Thus, suppose that $n > 2$, and the theorem holds for $\mc_1(n-1,q)^\perp$.
 Write $d_2 = d(\mc_1(2,q)^\perp)$.
 Let $c \in \mc_1(n,q)^\perp$ be a non-zero codeword of minimum weight.
 Since $\mc_1(n,q)^\perp$ contains cylinder codewords with minimum weight codewords of $\mc_1(2,q)^\perp$ as base, we know that $\wt(c) \leq q^{n-2} d_2$.
 Suppose by way of contradiction that $\wt(c) < q^{n-2} d_2.$
 
 There exists a 0-secant hyperplane $\pi$ to $\supp(c)$.
 Otherwise, by Lemma \ref{Lm:Restriction}, every hyperplane contains at least $d(\mc_1(n-1,q)^\perp) = q^{n-3} d_2$ points of $\supp(c)$.
 Since every point of $\supp(c)$ is incident with $\theta_{n-1}$ hyperplanes, this implies that
 \[
  \wt(c) \geq \frac{\theta_{n}}{\theta_{n-1}} q^{n-3} d_2
  = \frac{q^{n+1}-1}{q^n-1} q^{n-3} d_2
  > q^{n-2} d_2,
 \]
 which contradicts our assumptions.

 Now take an $(n-2)$-space $\rho$ in $\pi$.
 Then $\rho$ is incident to at least one other 0-secant hyperplane to $\supp(c)$.
 Otherwise, all other $q$ hyperplanes through $\rho$ contain at least $q^{n-3} d_2$ points of $\supp(c)$, but then $\wt(c) \geq q^{n-2} d_2$, contrary to our assumptions.
 Since there are $\theta_{n-1}$ $(n-2)$-spaces in $\pi$, we find in total a set $T$ of $\theta_{n-1}+1$ hyperplanes of $\pg(n,q)$ that are 0-secant to $\supp(c)$.

 On the other hand, let $P$ be a point of $\supp(c)$.
 Since $\sum_{Q \in \ell}c(Q) = 0 \neq c(P)$ for every line through $P$, we can choose a point $Q \in \supp(c) \setminus \{ P \}$ on each of the $\theta_{n-1}$ lines through $P$.
 If we add $P$ to this collection of points, we obtain a subset $S$ of $\supp(c)$ of size at $\theta_{n-1}+1$.

 Therefore, $S$ and $T$ are sets of $\theta_{n-1}+1$ points and hyperplanes respectively, with no point of $S$ incident with a hyperplane of $T$.
 By Theorem \ref{Thm:Haemers}, this implies that
 \[
  \theta_{n-1}^2 (\theta_{n-1}+1)^2 \leq q^{n-1} (q^n-1)^2
 \]
 Using that $\theta_{n-1} = \frac{q^n-1}{q-1}$, and taking square roots, this inequality is equivalent to
 \[
  \theta_{n-1} + 1 \leq q^\frac{n-1}2 (q-1).
 \]
 Since $n \geq 3$, we have $\frac{n-1}2 \leq n-2$, hence the inequality implies that
 \[
  q^{n-1} + q^{n-2} + \dots + q + 2 = \theta_{n-1} + 1 \leq q^{n-2}(q-1),
 \]
 which is clearly a contradiction.
\end{proof}

\section{Proof of Theorem \texorpdfstring{\ref{Thm:Main:3 dim}}{}}
 \label{Sec:All?}

In this section, we prove that we can reduce Conjecture \ref{Conj} to the 3-dimensional case.

\begin{lm}
 \label{Lm:Cyl Collineation}
 Consider a cylinder codeword $c = \Cyl_\rho(v)$ of $\mc_1(n,q)^\perp$, with $\rho$ an $(n-3)$-space, and $v$ a codeword of $\mc_1(2,q)^\perp$.
 Then for any plane $\pi$ disjoint to $\rho$, there exists a collineation $\phi: \pi_1 \to \pg(2,q)$ such that $c|_{\pi} = v \circ \phi$.
 In particular, $\wt(c|_\pi) = \wt(v)$.
\end{lm}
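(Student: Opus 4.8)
The plan is to realise the restriction $c|_\pi$ as the pullback of $v$ along a geometric projection from the vertex $\rho$. Let $\pi_0$ be the base plane used in the cylinder construction, identified with $\pg(2,q)$, so that $\pi_0$ is disjoint to $\rho$ and $c(P) = v\bigl(\vspan{\rho, P} \cap \pi_0\bigr)$ for every point $P \notin \rho$. Given an arbitrary plane $\pi$ disjoint to $\rho$, I would define the projection
\[
 \psi \colon \pi \to \pi_0 \colon P \mapsto \vspan{\rho, P} \cap \pi_0 .
\]
First I check that $\psi$ is well defined: since $P \notin \rho$, the space $\vspan{\rho, P}$ has projective dimension $n-2$, and since it contains $\rho$ while $\vspan{\rho, \pi_0} = \pg(n,q)$ (because $\rho$ and $\pi_0$ are disjoint of complementary dimension), the Grassmann dimension identity forces $\vspan{\rho, P} \cap \pi_0$ to be a single point. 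Interchanging the roles of $\pi$ and $\pi_0$ shows that $Q \mapsto \vspan{\rho, Q} \cap \pi$ is a two-sided inverse of $\psi$, so $\psi$ is a bijection.

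Next I would argue that $\psi$ is a collineation. Passing to the underlying vector space $V = \fq^{n+1}$, let $R \le V$ be the $(n-2)$-dimensional subspace corresponding to $\rho$; the $3$-dimensional subspaces $W, W_0$ corresponding to $\pi, \pi_0$ are both complements of $R$, so the quotient map $V \to V/R$ restricts to a linear isomorphism on each of them, and composing these yields a linear isomorphism $W \to W_0$ that induces $\psi$. Hence $\psi$ is a projectivity, in particular a collineation, and (after composing with the chosen identification $\pi_0 \cong \pg(2,q)$) we may take $\phi = \psi$. Alternatively, one checks directly via the same dimension count that $\psi$ sends each line $\ell \subseteq \pi$ onto the line $\vspan{\rho, \ell} \cap \pi_0$ of $\pi_0$ (here $\vspan{\rho,\ell}$ is a hyperplane), which together with bijectivity makes $\psi$ an incidence isomorphism.

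It remains to verify $c|_\pi = v \circ \phi$. For $P \in \pi$ we have $P \notin \rho$ since $\pi \cap \rho = \emptyset$, so the defining formula of $\Cyl_\rho(v)$ gives $c|_\pi(P) = c(P) = v\bigl(\vspan{\rho, P} \cap \pi_0\bigr) = v(\psi(P)) = (v \circ \phi)(P)$. Since $\phi$ is a bijection on point sets, $\supp(v \circ \phi)$ and $\supp(v)$ have equal size, whence $\wt(c|_\pi) = \wt(v)$.

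The only step with genuine content is the claim that projection from $\rho$ is a collineation between two complementary planes; the rest is bookkeeping with the definition of the cylinder codeword. The point to handle carefully is the repeated use of the Grassmann formula in tandem with the disjointness of $\rho$ from the planes involved, which is exactly what forces the relevant spans to exhaust $\pg(n,q)$ and hence the relevant intersections to have the minimal possible dimension.
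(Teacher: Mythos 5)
Your proof is correct and follows essentially the same route as the paper: both define the projection $P \mapsto \vspan{\rho,P} \cap \pi_0$ from $\pi$ to the base plane, verify via Grassmann's identity that it is a well-defined collineation, and then read off $c|_\pi = v \circ \phi$ from the defining formula of $\Cyl_\rho(v)$. Your verification that the projection is a projectivity (via the quotient $V/R$) is slightly more detailed than the paper's, which simply notes that lines map to lines, but the argument is the same.
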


\begin{proof}
 In order to construct $c = \Cyl_\rho(v)$, we need to choose a plane $\sigma$ in $\pg(n,q)$, disjoint to $\rho$, such that $v$ is a codeword in the dual code of points and lines in $\sigma$.
 Now take a plane $\pi$ disjoint to $\rho$.
 By Grassmann's identity, every $(n-2)$-space through $\rho$ intersects $\sigma$ and $\pi$ in a point.
 This yields a map $\phi: \pi \to \sigma: P \mapsto \vspan{P,\rho} \cap \sigma$.
 It is easy to see that a line $\ell$ of $\pi$ is mapped to the line $\vspan{\ell,\rho}$ of $\sigma$, hence $\phi$ is a collineation.
 Moreover, $c|_\pi(P) = c(P) = v(\vspan{\rho,P} \cap \sigma) = v(\sigma(P))$ for each point $P \in \pi$, thus $c|_\pi = v \circ \phi$.
\end{proof}

\begin{proof}[Proof of Theorem \ref{Thm:Main:3 dim}]
 If $q$ is prime, then Theorem \ref{Thm:prime} proves that Theorem \ref{Thm:Main:3 dim} is true.
 Hence, for the remainder of the proof, we assume that $q$ is not prime.
 
 We prove the theorem by induction.
 Take an integer $n > 3$, and suppose that for all integers $m$ with $3 \leq m < n$ all minimum weight codewords of $\mc_1(m,q)^\perp$ are cylinder codewords.
 Let $c$ be a minimum weight codeword of $\mc_1(n,q)^\perp$.
 Denote $d(\mc_1(2,q)^\perp)$ by $d_2$, so that $\wt(c) = q^{n-2} d_2$.
 Since we assumed that $q$ is not prime, Theorem \ref{Thm:UB} tells us that $d_2 < 2q$.
 Denote $\supp(c)$ as $S$.

 \bigskip

 \noindent {\it Step 1: There exists a plane $\pi$ such that $c|_\pi$ has weight $d_2$.}
 
 Using the same arguments as in the beginning of the proof of Theorem \ref{Thm:Main:Wt}, we can see that there exists a hyperplane $\Pi$ disjoint to $S$.
 If every $(n-2)$-space in $\Pi$ is incident with at least one other 0-secant hyperplane to $S$, we can derive a contradiction in the same fashion as in the proof of Theorem \ref{Thm:Main:Wt}.
 Thus, there must exist an $(n-2)$-space $\rho$ in $\Pi$ such that the other $q$ hyperplanes through $\rho$ intersect $S$.
 Since they each intersect $S$ in at least $q^{n-3} d_2$ points, and $|S| = q^{n-2} d_2$, they all intersect $S$ in a minimum weight codeword of $\mc_1(n-1,q)^\perp$.
 Take such a hyperplane $\Pi_2$ through $\rho$.
 Then $c|_{\Pi_2}$ is a cylinder codeword by the induction hypothesis, with some vertex $\rho'$, and every plane of $\Pi_2$ disjoint to $\rho'$ intersects $S$ in $d_2$ points by Lemma \ref{Lm:Cyl Collineation}.

 \bigskip

 \noindent {\it Step 2: For every $3$-space $\sigma$ through $\pi$, $c|_\sigma$ is a cylinder codeword.}

 Every $3$-space $\sigma$ through $\pi$ intersects $S$, hence it intersects $S$ in at least $q d_2$ points.
 This means that $\sigma$ contains at least $(q-1)d_2$ points of $S \setminus \pi$.
 Note that there are $\theta_{n-3}$ $3$-spaces through $\pi$.
 If any $3$-space intersects $S$ in more points, then
 \[
  |S| > d_2 + \theta_{n-3} (q-1) d_2 = q^{n-2} d_2,
 \]
 which contradicts that $c$ has minimum weight.
 By the hypothesis of the theorem, $c|_\sigma$ is a cylinder codeword for every $3$-space $\sigma$ through $\pi$.

 \bigskip

 \noindent {\it Step 3: Recognising the vertices of $\pi$.}

 For every $3$-space $\sigma$ through $\pi$, $c|_\sigma$ is a cylinder codeword, hence it has some point $P$ as vertex.
 We call such a point $P$ a \emph{vertex} of $\pi$.
 Let $\mv_\pi$ denote the set of vertices of $\pi$.
 Note that $|\mv_\pi| = \theta_{n-3}$.
 We claim that for any point $Q \in S \cap \pi$,
 \begin{align}
  \label{Eq:Vertex}
  \mv_\pi = \sett{P \notin \pi}{\vspan{P,Q} \setminus S = \{P\}},
 \end{align}
 i.e.\ $P$ is a vertex of $\pi$ if and only if $P$ is not in $S$ or $\pi$, and the line $\vspan{P,Q}$ is $q$-secant to $S$.
 By the definition of cylinder codewords, it immediately follows that $\mv_\pi$ contains all vertices of $\pi$.
 Vice versa, suppose that $P \in \mv_\pi$, let $\sigma = \vspan{P,\pi}$, and suppose to the contrary that $R$ is the vertex of the cylinder codeword $c|_\sigma$.
 Consider the line $\ell = \vspan{P,Q}$.
 Since $R \notin S$ and $\ell \setminus S = \{P\}$, we know that $R \notin \ell$.
 Then there exists a plane $\pi_2$ in $\sigma$ through $\ell = \vspan{P,Q}$ that does not contain $R$.
 Since $c|_\sigma$ is a cylinder codeword, $c|_{\pi_2}$ is a minimum weight codeword of $\mc_1(2,q)^\perp$ by Lemma \ref{Lm:Cyl Collineation}.
 However, every line through $Q$ contains at least one other point of $S$, so $|\pi_2 \cap S| \geq |\ell \cap S| + q = 2q > d_2$, a contradiction.
 Therefore, $\mv_\pi$ is indeed the set of vertices of $\pi$.

 \bigskip

 \noindent {\it Step 4: The vertices of $\pi$ constitute a subspace.}

 Suppose to the contrary that $\mv_\pi$ is not a subspace.
 Then there exist two points $P_1, P_2 \in \mv_\pi$ such that $\ell = \vspan{P_1,P_2}$ contains a point $R \notin \mv_\pi$.
 Note that since $\vspan{P_1,\pi} \neq \vspan{P_2,\pi}$, every point of $\ell$ is in a different $3$-space through $\pi$.
 Let $P_3$ be the vertex of $\sigma_3 = \vspan{R,\pi}$.
 Let $R'$ be the point $\vspan{R,P_3} \cap \pi$, and take a line $\ell_2$ in $\pi$ that does not contain $R'$, and does contain some point $Q \in S$.
 Then the plane $\pi_2 = \vspan{\ell_2,R}$ does not contain $P_3$.
 Since $c|_{\sigma_3}$ is a cylinder codeword with vertex $P_3$, $c|_{\pi_2}$ is a minimum weight codeword of $\mc_1(2,q)^\perp$.
 Therefore, we know from Step 2 that every $3$-space through $\pi_2$ yields a cylinder codeword, and we obtain a set $\mv_{\pi_2}$ of vertices of $\pi_2$.
 Moreover, since $\pi$ and $\pi_2$ share the point $Q \in S$, it follows from (\ref{Eq:Vertex}) that $\mv_\pi = \mv_{\pi_2}$.
 However, this now yields a contradiction, because then $P_1$ and $P_2$ are different vertices of $\pi_2$, but $\vspan{P_1,P_2}$ intersects $\pi_2$, hence $\vspan{P_1,\pi_2} = \vspan{P_2,\pi_2}$.

 \bigskip

 \noindent {\it Step 5: $c$ is a cylinder codeword with vertex $\mv_\pi$ and base $c|_\pi$.}

 Write $\rho$ for the subspace $\mv_\pi$.
 We check that $c = \Cyl_\rho(c|_\pi)$.
 Take a point $R$ in $\pg(n,q)$.
 If $R \in \rho$, then $c(R) = \Cyl_\rho(c|_\pi)(R) = 0$.
 If $R \in \pi$, then $\Cyl_\rho(c|_\pi)(R) = c|_\pi(R) = c(R)$.
 If $R \notin \rho$ and $R \notin \pi$, then let $\sigma$ be $\vspan{R,\pi}$.
 Note that since $\pi$ and $\rho$ are disjoint, $\sigma$ and $\rho$ share at most one point.
 By Grassmann's identity they share at least one point.
 So they share exactly one point $P$.
 Let $R'$ be the point $\vspan{P,R} \cap \pi$.
 Then $\vspan{R,\rho} \cap \pi$ is the point $R'$, so $\Cyl_\rho(c|_\pi)(R) = c|_\pi(R')$.
 On the other hand, we know that $c|_\sigma = \Cyl_P(c|_\pi)$, hence $c(R)=  c|_\sigma(R) = c|_\pi(\vspan{P,R}\cap \pi) = c|_\pi(R')$.
 We may conclude that $c$ indeed equals $\Cyl_\rho(c|_\pi)$.
\end{proof}

\section{Proof of Theorem \texorpdfstring{\ref{Thm:Main:Even:Set}}{}}
 \label{Sec:Main:Even}

In this section, we prove Theorem \ref{Thm:Main:Even:Set}, which is equivalent to Theorem \ref{Thm:Main:Even:Code}.
By Theorem \ref{Thm:Main:3 dim} (or by \cite[Proposition 3.9]{A3-2023-ANoteOn}), it suffices to prove Theorem \ref{Thm:Main:Even:Set} in $\pg(3,q)$.
We will call cylinders with a one-dimensional vertex and a hyperoval base in $\pg(3,q)$ simply \emph{hyperoval cylinders}.
We recall a proposition that helps us recognise hyperoval cylinders.

\begin{prop}[{\cite[Proposition 3.13]{A3-2023-ANoteOn}}]
 \label{Prop:qsecant}
 Suppose that $S$ is an even set in $\pg(3,q)$ of size $q(q+2)$.
 If there exists a $q$-secant line to $S$, then $S$ is a hyperoval cylinder.
\end{prop}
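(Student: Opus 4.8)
The plan is to fix a $q$-secant line $\ell$, write $V$ for the unique point of $\ell$ not in $S$, and show that $S\cup\{V\}$ is the union of $q+2$ distinct lines through $V$ whose set of directions is a hyperoval in the quotient plane $\pg(3,q)/V$; equivalently, that $S$ is the hyperoval cylinder with vertex $V$ (a cone with vertex a point over a hyperoval has exactly $q(q+2)$ points). Throughout one may assume $q$ is even: if $q$ is odd, a $q$-secant line meets the even set $S$ in an odd number of points, so the hypothesis is never satisfied. The two real ingredients are a control on the plane sections of $S$ through $\ell$, and a purely planar classification of even sets of size $2q$ having a $q$-secant line.

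First I would show that \emph{every plane $\tau$ through $\ell$ meets $S$ in exactly $2q$ points.} Since every line of $\tau$ is a line of $\pg(3,q)$, the section $S\cap\tau$ is an even set in $\tau\cong\pg(2,q)$, and it is non-empty as it contains the $q$ points of $\ell\setminus\{V\}$. Fixing $P\in\ell\cap S$ and partitioning the points of $\tau$ by the lines of $\tau$ through $P$ gives $|S\cap\tau|=q+\sum_{m}(|S\cap m|-1)$, where $m$ runs over the $q$ lines of $\tau$ through $P$ other than $\ell$; each such $m$ is even and contains $P$, so $|S\cap m|\ge 2$, whence $|S\cap\tau|\ge 2q$. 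Since the $q+1$ planes through $\ell$ partition the points off $\ell$, we have $\sum_\tau|S\cap\tau|=|S|+q\,|\ell\cap S|=q(q+2)+q^2=2q(q+1)$; as each summand is at least $2q$, all equal $2q$. Then equality above forces each term of $\sum_m(|S\cap m|-1)=q$ to equal $1$, i.e.\ every line of $\pg(3,q)$ through a point of $\ell\cap S$ other than $\ell$ is exactly $2$-secant to $S$.

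The heart of the argument is a planar sublemma: \emph{an even set $T$ in $\pg(2,q)$ with $|T|=2q$ and a $q$-secant line $\ell$ equals $\ell\,\triangle\,\ell'$ for some line $\ell'$ through $V:=\ell\setminus T$.} By the previous step (applied inside the plane), every line through a point of $\ell\cap T$ other than $\ell$ is $2$-secant to $T$. Fix $R\in T\setminus\ell$. The $q+1$ lines through $R$ are the $q$ lines $\vspan{R,P}$ with $P\in\ell\cap T$ — pairwise distinct, none equal to $\ell$, each $2$-secant and hence meeting $T$ in exactly $\{P,R\}$ — together with $\vspan{R,V}$. The former $q$ lines cover precisely $R$ and the $q$ points of $\ell\cap T$ among the points of $T$, so the remaining $q-1$ points of $T$ all lie on $\vspan{R,V}$; thus $T\setminus\ell\subseteq\vspan{R,V}$, and since $|T\setminus\ell|=q=|\vspan{R,V}|-1$ with $V\notin T$, we get $(T\setminus\ell)\cup\{V\}=\vspan{R,V}=:\ell'$ and $T=(\ell\cup\ell')\setminus\{V\}=\ell\,\triangle\,\ell'$. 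I expect this sublemma, together with making the two counting steps watertight, to be the main obstacle; the rest is assembly.

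Applying the sublemma to each of the $q+1$ plane sections $S\cap\tau_i$ gives lines $\ell'_i\subseteq\tau_i$ through $V$ with $S\cap\tau_i=\ell\,\triangle\,\ell'_i$. These are pairwise distinct: $\ell'_i=\ell'_j$ with $i\ne j$ would force $\ell'_i\subseteq\tau_i\cap\tau_j=\ell$, contradicting $\ell'_i\ne\ell$. Hence $S\cup\{V\}=\ell\cup\bigcup_i\ell'_i$ is the union of $q+2$ distinct lines through $V$, i.e.\ $S$ is the cone with vertex $V$ over a set $B$ of $q+2$ directions in $\pg(3,q)/V$. Finally, $B$ is a hyperoval: viewing a line of the quotient as a plane $\sigma$ through $V$ and choosing a line $m\subseteq\sigma$ with $V\notin m$, every point of $m$ lies on a unique line of $\sigma$ through $V$ and belongs to $S$ precisely when that line is one of the $q+2$ special lines, so $|m\cap S|=|B\cap\sigma|$; since $S$ is even this is even for every such $\sigma$, so $B$ is an even set of size $q+2$ in $\pg(2,q)$, i.e.\ a hyperoval. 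Therefore $S$ is a hyperoval cylinder.
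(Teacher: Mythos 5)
Your proof is correct. Note that the paper does not prove this proposition at all --- it imports it verbatim from \cite[Proposition 3.13]{A3-2023-ANoteOn} --- so there is no in-paper argument to compare against; what you have written is a complete, self-contained, and purely combinatorial substitute for that citation. The three stages all check out: (i) the standard-count through a point of $\ell\cap S$ forces every plane through $\ell$ to be exactly $2q$-secant and every line $\neq\ell$ through a point of $\ell\cap S$ to be exactly $2$-secant; (ii) the planar sublemma correctly pins down each section as $\ell\,\triangle\,\ell'_i$ by exhausting the pencil of lines through a point $R\in T\setminus\ell$; (iii) the $\ell'_i$ are pairwise distinct and concurrent at $V$, and projecting from $V$ turns evenness of $S$ on lines into evenness of the direction set $B$ on lines of the quotient plane, so $|B|=q+2$ makes $B$ a hyperoval. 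One cosmetic remark: the parenthetical ``a cone with vertex a point over a hyperoval has exactly $q(q+2)$ points'' should say ``with the vertex removed,'' to match the paper's cylinder definition $\bigcup_{P\in B}\vspan{P,\rho}\setminus\rho$; your actual argument does handle $V$ correctly, since you only ever show $S\cup\{V\}$ is the full cone. This does not affect the validity of the proof.
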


Remark that Proposition \ref{Prop:qsecant} immediately implies that the only even sets of size $q(q+2)$ in $\pg(3,q)$ for $q=2$ and $q=4$ are hyperoval cylinders.
We will now prove that this also holds for $q > 4$.
This recovers the case $q=8$, that was already proven by the author in \cite[Proposition 3.16]{A3-2023-ANoteOn}.

For the remainder of this section, we fix the following notation.
\begin{itemize}
 \item $q \geq 8$ is a power of 2,
 \item $S$ is an even set of size $q(q+2)$,
 \item $m$ is the smallest integer satisfying $m > 2$ such that there exists an $m$-secant line to $S$,
 \item we call every line that intersects $S$ in more that $2$ points a \emph{large secant},
 \item $n_i$ is the number of $i$-secant planes to $S$.
\end{itemize}

Note that the number $m$ is well-defined.
If not, every line would intersect $S$ in either $0$ or $2$ points.
If $P \in S$, then each line through $P$ contains exactly one extra point of $S$ from which it follows that $|S| = q^2+q+2 < q(q+2)$ if $q>2$.

We will need the following definition.

\begin{df}
 An even set $T$ in $\pg(2,q)$ is called a \emph{KM-arc} (named after Korchm\'aros and Mazzocca \cite{KM}) if there exists an even integer $t > 2$ such that $|T| = q+t$, and every line intersects $T$ in 0, 2, or $t$ points.
 We say that $t$ is the \emph{type} of the KM-arc.
\end{df}

We need two basic properties of KM-arcs, and give a proof to be complete.

\begin{lm}
 \label{Lm:KM}
 Suppose that $T$ is a KM-arc of type $t$ in $\pg(2,q)$.
 Then
 \begin{enumerate}
  \item \label{Lm:KM:1} $t$ is a power of $2$,
  %\item \label{Lm:KM:2} there are $\frac qt + 1$ lines that are $t$-secant to $T$,
  %\item \label{Lm:KM:3} there are $q \frac{q+t}2$ $2$-secant lines to $T$,
  \item \label{Lm:KM:4} there are $\frac q2 - \left( \frac t 2 + \frac 1t - 1\right)q$ $0$-secant lines to $T$.
 \end{enumerate}
\end{lm}

\begin{proof}
 Take a point $P \in T$.
 Every line through $P$ is either $2$- or $t$-secant to $T$.
 Since $|T|=q+t$, it readily follows that $P$ is incident with one $t$-secant line, and $q$ $2$-secant lines.
 By double counting, this means that there are $\frac{q+t}t$ $t$-secant lines and $\frac{q(q+t)}2$ $2$-secant lines.
 In particular, since $\frac{q+t}t$ must be integer, $t$ must divide $q$, and hence be a power of $2$.
 This leaves us with
 \[
  (q^2+q+1) - \frac{q+t}t - q \frac{q+t}2 = \frac 12 q^2 - \left( \frac t2 + \frac 1t - 1\right)q
 \]
 $0$-secant lines.
\end{proof}

\begin{lm}
 \label{Lm:M}
 \begin{enumerate}
  \item \label{Lm:M:1} There exists a plane $\pi$ that intersects $S$ in a KM-arc of type $m$.
  \item \label{Lm:M:2} It holds that $m$ is a power of $2$, and if $q > 8$, then $m \neq \frac q2$.
  \item \label{Lm:M:3} There are at least $q^2 - \left( m + \frac 2{m} - 2 \right) q$ 0-secant planes to $S$.
 \end{enumerate}
\end{lm}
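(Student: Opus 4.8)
I would establish the three parts in the order (1), (3), (2), since (3) uses the KM-arc plane produced in (1), and (2) uses both.

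\textbf{Part (1): producing a KM-arc plane.} Since $S$ is an even set, $m$ is even, so $m \geq 4$. Fix an $m$-secant line $\ell$ and a point $P \in S \cap \ell$. Every line of $\pg(3,q)$ through $P$ contains $P$, hence meets $S$ in an even number $\geq 2$ of points, so it is $2$-secant or, by the minimality of $m$, at least $m$-secant. Double counting incidences between $S$ and the $\theta_2$ lines through $P$ gives $\sum_{\ell' \ni P} |S \cap \ell'| = (|S|-1) + \theta_2 = 2q^2 + 3q$; comparing with the lower bound $2\theta_2 + (m-2)N$, where $N$ is the number of $\geq m$-secant lines through $P$, yields $N \leq \tfrac{q-2}{m-2} \leq \tfrac{q-2}{2} < q+1$. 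The $q+1$ planes through $\ell$ pairwise meet in $\ell$ and each is spanned by $\ell$ together with a line through $P$, so at most $N-1 < q+1$ of them contain a $\geq m$-secant line through $P$ other than $\ell$; choose a plane $\pi \supseteq \ell$ that does not. Partitioning $(S\cap\pi)\setminus\{P\}$ by the pencil of lines of $\pi$ through $P$ (with $\ell$ contributing $m-1$ and the other $q$ lines, all now $2$-secant, contributing $1$ each) gives $|S\cap\pi| = q+m$. Finally, for an \emph{arbitrary} point $Q \in S\cap\pi$, partitioning $(S\cap\pi)\setminus\{Q\}$ by the $q+1$ lines of $\pi$ through $Q$, each $2$- or $\geq m$-secant, with total $q+m-1$, and using $m>2$, forces $Q$ onto exactly one $\geq m$-secant line of $\pi$, which is then exactly $m$-secant. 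Hence every line of $\pi$ is $0$-, $2$-, or $m$-secant to $S\cap\pi$, and together with $|S\cap\pi|=q+m$, $m$ even, $m>2$, this says $S\cap\pi$ is a KM-arc of type $m$.

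\textbf{Part (3): counting $0$-secant planes.} Fix the KM-arc plane $\pi$ from Part (1). By Lemma~\ref{Lm:KM} (the count of external lines of a KM-arc), $\pi$ contains exactly $e = \tfrac12 q^2 - \left(\tfrac m2 + \tfrac 1m - 1\right)q$ lines disjoint from $S$. The key observation is that any $0$-secant plane $\rho \neq \pi$ meets $\pi$ in a line disjoint from $S$ and is one of the $q+1$ planes through that line; thus $\rho \mapsto (\rho\cap\pi)$ identifies the $0$-secant planes other than $\pi$ with the pairs $(\mu,\rho)$ where $\mu \subset \pi$ is an external line of $S$ and $\rho \neq \pi$ is a $0$-secant plane through $\mu$. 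For a fixed such $\mu$, the $q$ planes through $\mu$ other than $\pi$ partition $S$ (as $\mu\cap S = \emptyset$), so their section sizes sum to $|S|-|S\cap\pi| = q^2+q-m$; since a non-empty even set in a plane has at least $q+2$ points and $(q+2)(q-1) = q^2+q-2 > q^2+q-m$, at most $q-2$ of these planes meet $S$, i.e.\ at least two are $0$-secant. Summing over the $e$ external lines $\mu$ of $\pi$ and adding $\pi$ itself gives $n_0 \geq 1 + 2e = 1 + q^2 - \left(m + \tfrac 2m - 2\right)q$, which proves (3).

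\textbf{Part (2) and the main obstacle.} That $m$ is a power of $2$ is immediate from Lemma~\ref{Lm:KM} (a KM-arc has power-of-$2$ type) applied to the KM-arc of Part (1). For $m \neq q/2$ when $q>8$ I would argue by contradiction: if $m=q/2$, then Part (3) already forces $n_0 \geq \tfrac12 q^2 + 2q - 4$ zero-secant planes, and I would combine this with a complementary lower bound on the number of ``large'' secant planes — obtained from the $\tfrac{q+m}{m}$ exactly-$m$-secant lines of $\pi$, along each of which all $q$ further planes carry a section of size $\geq q+2$ averaging roughly $q+m+1$ — and then feed both bounds into a variance-type estimate governed by the identities $\sum_i n_i = \theta_3$ and $\sum_i i\,n_i = |S|\theta_2$ (and, if needed, the second-moment identity). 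The expectation is that this inequality is tight exactly at $m=q/2$ and becomes contradictory precisely once $q>8$. I expect this last step to be the main difficulty: Parts (1) and (3) are clean pencil-counting plus a bijection, whereas here one must wring enough quantitative slack out of the global distribution of plane sections, and the value $m=q/2$ sits right on the boundary where the naive estimates only barely fail to close.
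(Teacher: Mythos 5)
Your Parts (1) and (3) are essentially correct and run close to the paper's own argument. In (1) you locate the $(q+m)$-secant plane by finding a plane through $\ell$ in which every further line through $P$ is $2$-secant (via the bound $N\leq (q-2)/(m-2)$ on large secants through $P$), whereas the paper simply averages $|S|$ over the $q+1$ planes through $\ell$; both work, and your pencil count at an arbitrary point $Q\in S\cap\pi$ correctly forces every line of $\pi$ to be $0$-, $2$-, or exactly $m$-secant. Part (3) is the paper's argument: each external line of the KM-arc plane lies on at least two $0$-secant planes because $(q+2)(q-1)=q^2+q-2>q^2+q-m$. One small slip: you count ``$\pi$ itself'' as a $0$-secant plane, but $\pi$ meets $S$ in $q+m$ points; dropping that spurious $+1$ still gives exactly the claimed bound $n_0\geq q^2-\left(m+\frac 2m-2\right)q$, so nothing is lost.

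The genuine gap is the second half of Part (2), the claim that $m\neq q/2$ for $q>8$: you do not prove it, you only outline a plan (a variance-type estimate fed with a lower bound on large plane sections) and explicitly flag it as the main difficulty. Be aware that the moment identities alone cannot settle this case: the paper's own variance argument (Lemma \ref{Lm:LBm}) yields only $m>\sqrt{2q}$, and $q/2>\sqrt{2q}$ exactly when $q>8$, so $m=q/2$ lies strictly inside the range that such global estimates permit; extra structural input is needed. The paper's route is local and hinges on Proposition \ref{Prop:qsecant}, which you never invoke: if $m=q/2$ then $S$ admits a $(q/2)$-secant line and hence is not a hyperoval cylinder, so by that proposition $S$ has no $q$-secant lines at all. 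Counting through a point $P\in S$ (total excess $q-2$ over the $\theta_2$ lines through $P$, each large secant contributing excess at least $q/2-2$) then shows that for $q>8$ every point of $S$ lies on exactly two large secants, one $(q/2)$-secant and one $(q/2+2)$-secant; the $(q/2+2)$-secants therefore partition $S$, forcing $\frac q2+2$ to divide $q(q+2)$ and hence to divide $8$, which is impossible for $q\geq 16$. Without some such additional ingredient your sketched global inequality is unlikely to close, and as written Part (2) is incomplete.
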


\begin{proof}
 (1) Take an $m$-secant line $\ell$ to $S$.
 Take a point $P \in \ell \cap S$.
 Since every line through $P$ contains another point of $S$, every plane through $\ell$ contains at least $q$ more points of $S$.
 If every plane contains more than $q$ extra points of $S$, then $|S| \geq m + (q+1)(q+2) > q(q+2)$, a contradiction.
 Hence there exists a $(q+m)$-secant plane $\pi$ to $S$.
 Since every line of $\pi$ intersects $\pi \cap S$ in at most $2$ or at least $m$ points, it easily follows that $S \cap \pi$ is a KM-arc of $\pi$ of type $m$.

 (2) Since a KM-arc of type $m$ exists, $m$ must be a power of $2$.
 Now suppose that $q > 8$ and $m = \frac q 2$.
 Then $S$ is not a hyperoval cylinder, so by Proposition \ref{Prop:qsecant} there are no $q$-secant lines to $S$.
 Therefore, every point of $S$ is incident with at least $2$ large secant lines.
 It is straightforward to check that every point of $S$ is incident to one $\frac q2$-secant and one $\left(\frac{q}2+2\right)$-secant line.
 Therefore, the $\left( \frac q2 + 2 \right)$-secant lines partition $S$, thus $\frac q2 + 2$ must divide $q(q+2) = q^2 + 2q$.
 Since $\left( \frac q2 + 2 \right)(2q-4) = q^2+2q-8$, we see that $\frac q2 + 2$ must divide 8, but $\frac q2 + 2 > \frac q2 \geq 8$, which yields a contradiction.

 (3) By (1) there exists a plane $\pi$ that intersects $S \cap \pi$ in a KM-arc of type $m$.
 By Lemma \ref{Lm:KM} (\ref{Lm:KM:4}), $\pi$ contains $\frac 12 q^2 - \left( \frac{m}2 + \frac 1{m} - 1 \right)q$ 0-secant lines to $S$.
 Each $0$-secant line $\ell$ in $\pi$ is incident to at least $2$ $0$-secant planes, otherwise there are at least $q-1$ planes through $\ell$, different from $\pi$, that each contain at least $q+2$ points of $S$, hence
 \[
  |S| \geq (q+m) + (q-1)(q+2) = q(q+2) + (m-2) > q(q+2),
 \]
 a contradiction.
 Thus, there are at least $2\left( \frac 12 q^2 - \left( \frac{m}2 + \frac 1{m} - 1 \right)q \right)$ $0$-secant planes to $S$.
\end{proof}

\begin{lm}
 \label{Lm:Standard Eq}
 \begin{align}
  \label{Standard Eq:1}
  \sum_{i>0} (i-(q+2))(i-(q+m)) n_i \leq (q+2) \left( q m^2 - (2q-1)m - q\left(\frac 52 q-1 \right) \right).
 \end{align}
\end{lm}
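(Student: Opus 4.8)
The plan is to derive (\ref{Standard Eq:1}) from the three ``standard counting equations'' for the distribution of $S$ over the planes of $\pg(3,q)$, together with the lower bound on the number of $0$-secant planes supplied by Lemma \ref{Lm:M}.

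First I would record the identities obtained by double counting, respectively, the planes of $\pg(3,q)$, the incident (point of $S$, plane)-pairs, and the incident (ordered pair of distinct points of $S$, plane)-pairs:
\[
 \sum_{i \geq 0} n_i = \theta_3, \qquad
 \sum_{i \geq 0} i\, n_i = |S|\, \theta_2, \qquad
 \sum_{i \geq 0} i(i-1)\, n_i = |S|\,(|S|-1)\, \theta_1,
\]
using that every point of $\pg(3,q)$ lies on $\theta_2$ planes, every line lies on $\theta_1$ planes, and $|S| = q(q+2)$. Next, expanding
\[
 \bigl(i-(q+2)\bigr)\bigl(i-(q+m)\bigr) = i(i-1) - (2q+m+1)\, i + (q+2)(q+m),
\]
and summing over all $i \geq 0$, the three identities turn $\sum_{i\geq 0}\bigl(i-(q+2)\bigr)\bigl(i-(q+m)\bigr) n_i$ into an explicit polynomial in $q$ and $m$; a direct computation gives
\[
 \sum_{i \geq 0}\bigl(i-(q+2)\bigr)\bigl(i-(q+m)\bigr) n_i = q^4 + q^3 - 3q^2 + (m-2)q + 2m =: B(q,m).
\]

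Since the $i = 0$ term of this sum is exactly $(q+2)(q+m)\, n_0$, subtracting it yields
\[
 \sum_{i > 0}\bigl(i-(q+2)\bigr)\bigl(i-(q+m)\bigr) n_i = B(q,m) - (q+2)(q+m)\, n_0 .
\]
Now I would insert $n_0 \geq q^2 - \bigl(m + \tfrac 2m - 2\bigr)q$ from Lemma \ref{Lm:M}(\ref{Lm:M:3}); as $(q+2)(q+m) > 0$, this converts the equality into an upper bound. A final computation shows that
\[
 B(q,m) - (q+2)(q+m)\Bigl(q^2 - \bigl(m + \tfrac 2m - 2\bigr)q\Bigr)
  = (q+2)\Bigl(qm^2 - (2q-1)m - q\bigl(\tfrac 52 q - 1\bigr)\Bigr) + \frac{(4-m)(q+2)q^2}{2m},
\]
and by Lemma \ref{Lm:M}(\ref{Lm:M:2}), $m$ is a power of $2$ with $m > 2$, so $m \geq 4$ and the last term is $\leq 0$. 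This gives (\ref{Standard Eq:1}).

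The argument is essentially bookkeeping: the genuine inputs are the three combinatorial identities, the lower bound on $n_0$ from Lemma \ref{Lm:M}, and the inequality $m \geq 4$. The point requiring care is the polynomial arithmetic — in particular, the right-hand side of (\ref{Standard Eq:1}) matches $B(q,m) - (q+2)(q+m)n_0$ only up to the non-positive slack term $\tfrac{(4-m)(q+2)q^2}{2m}$, and it is precisely this slack that makes the hypothesis $m \geq 4$ essential and that one must track correctly when simplifying.
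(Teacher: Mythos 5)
Your proposal is correct and follows essentially the same route as the paper: the three standard double-counting identities for $n_i$, the lower bound on the number of $0$-secant planes from Lemma \ref{Lm:M}(\ref{Lm:M:3}) (the paper phrases it as an upper bound on $\sum_{i>0} n_i = \theta_3 - n_0$, which is the same thing), and finally $m \geq 4$ to absorb the $\tfrac{2q^2}{m}$ slack — your identity $\tfrac{2q^2}{m} = \tfrac{q^2}{2} + \tfrac{(4-m)q^2}{2m}$ is exactly the paper's step ``since $m \geq 4$, we have $\tfrac{2q^2}{m} \leq \tfrac{q^2}{2}$.'' The arithmetic, including $B(q,m) = q^4 + q^3 - 3q^2 + (m-2)q + 2m$, checks out.
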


\begin{proof}
Denote the number of planes that intersect $S$ by $k$.
We know from \ref{Lm:M} (\ref{Lm:M:3}) that $k \leq \theta_3 - q^2 + \left( m + \frac 2m - 2\right)q$.
Then
\begin{align*}
  \sum_{i>0} n_i &= k \leq \theta_3 - q^2 + \left( m + \frac 2m - 2\right)q \\
  \sum_{i>0} i n_i &= |S|\theta_2 = q(q+2)(q^2+q+1) \\
  \sum_{i>0} i^2 n_i &= |S|( \theta_2 + (|S|-1)\theta_1) = q(q+2)q(q^2+4q+2)
 \end{align*}
Indeed, the first equation is by definition of $k$; the second equation follows from double counting pairs $(P,\pi)$ with $P \in S$, and $\pi$ a plane through $P$; the third equaltiy follows from double counting the triples $(P,Q,\pi)$ with $P$ and $Q$ (not necessarily distinct) points of $S$, and $\pi$ a plane through $P$ and $Q$.
Therefore
\begin{align*}
 \sum_{i>0} &(i-(q+2))(i-(q+m)) n_i
 = \sum_i i^2 n_i - (2q+m+2) \sum_i i n_i + (q+2)(q+m) \sum_{i>0} n_i \\
 &= q(q+2)q(q^2+4q+2) - (2q+m+2) q(q+2) (q^2+q+1) + (q+2)(q+m) k \\
 &\leq  (q+2) \left( q^2 (q^2+4q+2) - (2q+m+2)q(q^2+q+1) + (q+m)\left( q^3 + \left( m + \frac 2m -1 \right) q + 1 \right) \right) \\
 &= (q+2) \left( q m^2 - (2q-1) m - q(3q-1) + \frac{2q^2}m \right).
\end{align*}
Since $m \geq 4$, we have $\frac{2q^2}m \leq \frac{q^2}2$, and the lemma follows.
\end{proof}

\begin{lm}
 \label{Lm:LBm}
 $m > \sqrt{2q}$.
\end{lm}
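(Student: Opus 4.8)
The plan is to combine a structural restriction on the plane sections of $S$ with the ``standard equation'' \eqref{Standard Eq:1}, forcing its left-hand side to be non-negative.

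The first and main step is to show that every plane $\pi$ satisfies
\[
 |S \cap \pi| \in \{0\} \cup \{q+2\} \cup \{q+m, q+m+1, \dots\}.
\]
Since $S$ meets every line of $\pg(3,q)$ in an even number of points, it meets every line of $\pi$ evenly, so $S \cap \pi$ is empty or an even set of $\pi \cong \pg(2,q)$, and in the latter case $|S\cap\pi| \ge q+2$. Suppose now that $0 < |S\cap\pi| < q+m$, and fix $P \in S\cap\pi$. Each of the $q+1$ lines of $\pi$ through $P$ meets $S$ in an even number of points, hence in at least $2$; and if such a line is a large secant it meets $S$ in at least $m$ points, by minimality of $m$. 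The points of $S\cap\pi$ other than $P$ are partitioned by these $q+1$ lines, so if one of the lines were a large secant we would get $|S\cap\pi| \ge 1 + (m-1) + q = q+m$, a contradiction. Hence every line of $\pi$ through $P$ is exactly $2$-secant to $S$; as $P$ ranges over $S\cap\pi$, this forces $S\cap\pi$ to be a hyperoval, so $|S\cap\pi| = q+2$.

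With this in hand, every term $(i-(q+2))(i-(q+m))\,n_i$ on the left of \eqref{Standard Eq:1} is non-negative: for $i=0$ both factors are negative, for $i\ge q+m$ both are non-negative, and for $i = q+2$ the first factor vanishes. Hence the left-hand side of \eqref{Standard Eq:1} is $\ge 0$, and since $q+2>0$,
\[
 qm^2 - (2q-1)m - q\left(\frac 52 q - 1\right) \ge 0 .
\]
Dropping the non-negative term $(2q-1)m$ and dividing by $q$ gives $m^2 \ge \frac 52 q - 1 > 2q$, the last inequality because $q \ge 8 > 2$; taking square roots yields $m > \sqrt{2q}$.

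I expect the ``gap'' statement $|S\cap\pi| \notin \{q+3, \dots, q+m-1\}$ to be the only non-routine part; everything afterwards is a one-line manipulation of \eqref{Standard Eq:1}. The subtle point in proving it is to use the definition of $m$ (the least integer exceeding $2$ realised as the size of $\ell\cap S$ for some line $\ell$) correctly, so that a large secant of $\pi$ through a point of $S$ is guaranteed to carry at least $m-1$ further points of $S$.
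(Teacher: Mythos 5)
Your proposal is correct and follows essentially the same route as the paper: both establish that every plane meets $S$ in $0$, $q+2$, or at least $q+m$ points (the paper argues via the presence or absence of a large secant in the plane, you via counting through a fixed point of $S\cap\pi$, which is equivalent), deduce that the left-hand side of \eqref{Standard Eq:1} is non-negative, and extract $m>\sqrt{2q}$ from the resulting quadratic inequality. Your final algebraic step (dropping the term $(2q-1)m$ to get $m^2\geq \tfrac52 q-1>2q$) is a harmless, slightly cleaner variant of the paper's observation that the quadratic is negative at $m=0$ and $m=\sqrt{2q}$.
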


\begin{proof}
 Every plane $\pi$ intersects $S$ in either $0$, $q+2$, or at least $q+m$ points.
 Indeed, if $\pi$ has only $0$- and $2$-secant lines to $S$, then $\pi \cap S$ is either empty or a hyperoval, and otherwise $\pi$ has a line $\ell$ with $|\ell \cap S| \geq m$, whence $|S \cap \pi| \geq |\ell \cap \pi|+q \geq q+m$.
 Therefore, the sum in the left hand side of (\ref{Standard Eq:1}) has only non-negative terms.
 This tells us that
 \[
  q m^2 - (2q-1) m - q \left( \frac52 q-1 \right) \geq 0.
 \]
 The inequality does not hold for $m=0$ and for $m=\sqrt{2q}$, so it does not hold for any intermediary values, since the left hand side is a quadratic polynomial in $m$.
 Therefore, $m > \sqrt{2q}$.
\end{proof}

\begin{lm}
 \label{Lm:Large planes}
 \begin{enumerate}
  \item \label{Lm:Large planes:1} If a plane $\pi$ intersects $S$ in more than $q+m$ points, then it intersects $S$ in at least $q+2m-2$ points.
  \item \label{Lm:Large planes:2} If $m < q$, each $2$-secant line to $S$ is incident with at least one plane that contains at least $q+2m-2$ points of $S$.
  \item \label{Lm:Large planes:3} There are more than $\frac 12 q^3 (q+2)$ $2$-secant lines to $S$.
 \end{enumerate}
\end{lm}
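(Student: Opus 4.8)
The plan is to prove the three parts in the order stated, since (2) relies on (1) and all three rest on counting secant lines and planes.

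\medskip

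\noindent For part (1), fix a plane $\pi$ with $|\pi\cap S|=q+s$, $s>m$, and put $T=\pi\cap S$. Every line of $\pi$ is a line of $\PG(3,q)$, so $T$ is an even set of $\pi$ all of whose secants are $0$-, $2$-, or $\geq m$-secant, and $|T|$ is even (sum the even-set condition over the $q^2+q+1$ lines of $\pi$, noting $q+1$ is odd), so $s$ is even. Counting the points of $T$ on the $q+1$ lines of $\pi$ through a fixed $P\in T$: if $P$ lies on $a_P$ large secants of $T$, then $\sum_{\ell\ni P,\ \ell\text{ large}}|\ell\cap T|=s+2a_P-2\geq a_Pm$, so $a_P(m-2)\leq s-2$. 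If some $P$ has $a_P\geq 2$ this already gives $s\geq 2m-2$. Otherwise every $P\in T$ has $a_P=1$ (since $a_P=0$ forces $|T|=q+2<q+m$), and the displayed identity forces the unique large secant through $P$ to be exactly $s$-secant; hence all large secants of $T$ have size $s$ and $T$ is a KM-arc of type $s$. Then Lemma~\ref{Lm:KM}(\ref{Lm:KM:1}) makes $s$ a power of $2$, and combined with $m$ a power of $2$ (Lemma~\ref{Lm:M}(\ref{Lm:M:2})) and $s>m$ this gives $s\geq 2m>2m-2$. So in every case $|\pi\cap S|\geq q+2m-2$.

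\medskip

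\noindent For part (2), let $\ell$ be a $2$-secant with $\ell\cap S=\{A,B\}$. Each of the $q+1$ planes $\pi_j$ through $\ell$ meets $S$ in a nonempty even set of $\pi_j$, hence in at least $q+2$ points, and double counting incident points gives $\sum_j\bigl(|\pi_j\cap S|-(q+2)\bigr)=q-2$. Assume no $\pi_j$ meets $S$ in $\geq q+2m-2$ points. Since a nonempty section $\pi_j\cap S$ is a hyperoval or contains a $\geq m$-secant line, each $|\pi_j\cap S|$ equals $q+2$ or is $\geq q+m$, and by part (1) a value exceeding $q+m$ is $\geq q+2m-2$; so every $\pi_j$ meets $S$ in exactly $q+2$ or exactly $q+m$ points, and if $x$ of them are $(q+m)$-secant then $x(m-2)=q-2$, whence $m-2\mid q-2$. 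Writing $q=2^k$, $m=2^j$, this says $2^{j-1}-1\mid 2^{k-1}-1$, i.e.\ $(j-1)\mid(k-1)$; but $m>\sqrt{2q}$ (Lemma~\ref{Lm:LBm}) forces $j-1>\tfrac{k-1}{2}$, leaving only $j-1=k-1$, i.e.\ $m=q$, contradicting $m<q$.

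\medskip

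\noindent For part (3), count pairs of points of $S$ via the lines carrying them: $\binom{|S|}{2}=\#\{2\text{-secants}\}+\sum_{\ell\text{ large}}\binom{|\ell\cap S|}{2}$. For $P\in S$, counting lines through $P$ in $\PG(3,q)$ gives $\sum_{\ell\ni P,\ \ell\text{ large}}|\ell\cap S|=q+2a_P-2$, whence $a_P\leq\frac{q-2}{m-2}$; hence $\sum_{\ell\text{ large}}|\ell\cap S|(|\ell\cap S|-1)=\sum_{P\in S}(q+a_P-2)\leq|S|(q-2)\tfrac{m-1}{m-2}$, so $\sum_{\ell\text{ large}}\binom{|\ell\cap S|}{2}\leq\tfrac12|S|(q-2)\tfrac{m-1}{m-2}$. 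Substituting $|S|=q(q+2)$, the desired inequality $\#\{2\text{-secants}\}>\tfrac12 q^3(q+2)$ reduces to $(2q-1)(m-2)>(q-2)(m-1)$, i.e.\ $m(q+1)>3q$, which holds since $m\geq 4$. I expect the only genuinely delicate step to be the borderline case in part (1): a plane with more than $q+m$ points of $S$ might a priori have only slightly more, and excluding this needs one to recognise $\pi\cap S$ as a KM-arc exactly when no point of it lies on two large secants and then combine the power-of-$2$ restriction on KM-arc types with the one on $m$. The small number-theoretic step in part (2) — forcing $m=q$ out of $m-2\mid q-2$ for powers of $2$ via $m>\sqrt{2q}$ — is the other place requiring a little care.
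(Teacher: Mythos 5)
Your proof is correct and, in substance, follows the paper's own argument. Part (1) is structurally identical to the paper's: split on whether some point of $\pi\cap S$ lies on at least two large secants of $\pi$ (giving $\geq q+2m-2$ directly) or every point lies on exactly one (forcing $\pi\cap S$ to be a KM-arc whose type, being a power of $2$ exceeding $m$, is at least $2m$); your verification that the unique large secant through each point is then exactly $s$-secant is a detail the paper leaves implicit. Part (2) sets up the same identity $x(m-2)=q-2$; your finish via $2^{j-1}-1\mid 2^{k-1}-1\iff(j-1)\mid(k-1)$ combined with $m>\sqrt{2q}$ is a clean alternative to the paper's subtraction trick (showing $m-2$ divides $2\frac qm-2$, hence $m^2\leq 2q$), and both are valid. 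Part (3) is the one place you genuinely deviate: the paper simply notes that each $P\in S$ lies on at most $\frac{q-2}{m-2}<\frac q2$ large secants, hence on more than $q^2$ two-secant lines, and double counts; your second-moment computation via $\binom{|S|}{2}$ minus the pairs absorbed by large secants is a correct but slightly heavier route to the same bound, and your reduction of the target inequality to $m(q+1)>3q$ checks out. No gaps.
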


\begin{proof}
 (1)  First consider the case where all points of $\pi \cap S$ are on at most one large secant line in $\pi$.
 Then they must be on exactly one large secant, which intersects $S$ in $t = |\pi \cap S|-q$ points.
 Therefore, $\pi \cap S$ is a KM-arc of type $t > m$ in $\pi$, hence, $t \geq 2m$ and $|S \cap \pi| = q+t \geq q+2m$.
 Now consider the case where some point $P \in \pi \cap S$ is incident with at least $2$ large secants in $\pi$.
 Then both lines are at least $m$-secant, so $|S \cap \pi| \geq 1 + (q-1)\cdot 1 + 2 \cdot(m-1) = q+2m-2$.

 (2) Let $\ell$ be a 2-secant line to $S$ and suppose on the contrary that $\ell$ is not incident with any plane that intersects $S$ in at least $q+2m-2$ points.
 Then every plane through $\ell$ is either $(q+2)$-secant or $(q+m)$-secant to $S$.
 Let $k$ denote the number of $(q+m)$-secant planes through $\ell$.
 Then
 \[
  q(q+2) = |S| = 2 + (q+1-k)q + k(q+m-2) = q(q+1)+2 + k(m-2),
 \]
 hence $k = \frac{q-2}{m-2}$, which implies that $m-2$ divides $q-2$.
 Since $m-2$ divides $(m-2) \frac qm = q - 2 \frac qm$, $m-2$ must divide $(q-2) - (q - 2 \frac qm) = 2 \frac qm - 2$.
 Since $q > m$, this implies that $m \leq 2 \frac qm$, which contradicts our previous result that $m > \sqrt{2q}$.
 This contradiction finishes the proof.

 (3) Take a point $P \in S$.
 Let $k$ denote the number of large secants through $P$.
 Then 
 \[
  q(q+2) = |S| \geq 1 + (q^2+q+1-k) \cdot 1 + k\cdot (m-1) = q^2+q+2 + k (m-2),
 \]
 which implies that $k \leq \frac{q-2}{m-2} < \frac q 2$.
 Therefore, $P$ is on more than $q^2$ 2-secant lines.
 A double count then tells us that there are more than $\frac 12 q(q+2)\cdot q^2$ 2-secant lines to $S$.
\end{proof}

We are now ready to finish the proof.

\begin{proof}[Proof of Theorem \ref{Thm:Main:Even:Set}]
 It suffices to prove that $m = q$.
 If $q = 8$, then $m > \sqrt{2 \cdot 8} = 4$ by Lemma \ref{Lm:LBm}, hence, since $m$ is a power of $2$, $m = 8$.
 Hence, we may suppose that $q > 8$.
 
 Suppose to the contrary that $m < q$, then $q \leq \frac q4$ by Lemma \ref{Lm:M} (\ref{Lm:M:2}).
 We will give a lower bound on the left hand side of (\ref{Standard Eq:1}).
 By Lemma \ref{Lm:Large planes} there are more than $\frac 12 q^3 (q+2)$ $2$-secant lines to $S$, each of which is incident with at least one plane that intersects $S$ in at leas $q+2m-2$ points.
 If $\pi$ is an $i$-secant plane to $S$ and $i > q+2$, then every point of $\pi \cap S$ is on at least one large secant line to $S$ in $\pi$, hence at most $q$ $2$-secant lines, which implies that $\pi$ contains at most $i \frac q2$ $2$-secant lines to $S$.
 Therefore,
 \[
  \sum_{i \geq q+2m-2} i n_i \frac q2 > \frac 12 q^3 (q+2),
 \]
 which implies that
 \[
  \sum_{i \geq q+2m-2} i n_i > q^2 (q+2).
 \]
 Now
 \[
  \sum_{i\geq q+2m-2} (i-(q+2))(i-(q+m)) n_i = \sum_{i \geq q+2m-2} \frac{(i-(q+2))(i-(q+m))}i i n_i.
 \]
 It is straightforward to check, e.g.\ by considering the derivative, that for $i > q+m$, the expression $\frac{(i-(q+2))(i-(q+m))}i$ is increasing in $i$.
 Therefore
 \begin{align*}
  \sum_{i \geq q+2m-2} & \frac{(i-(q+2))(i-(q+m))}i i n_i \\
  & \geq \sum_{i \geq q+2m-2} \frac{((q+2m-2)-(q+2))((q+2m-2)-(q+m))}{q+2m-2} i n_i \\
  & > \frac{(2m-4)(m-2)}{q+2m-2} q^2(q+2).
 \end{align*}
 Now using that $m \leq \frac q4$, we have $q+2m-2 < \frac 3 2 q$, hence
 \[
  \sum_{i\geq q+2m-2} (i-(q+2))(i-(q+m)) n_i > \frac{2(m-2)^2}{\frac 32 q} q^2(q+2) = \frac 43 (m-2)^2 q(q+2).
 \]
 Recall that the sum in the left hand side of (\ref{Standard Eq:1}) has only non-negative terms, hence we obtain from Lemma \ref{Lm:Standard Eq}
 \[
  \frac 43 (m-2)^2 q < \frac1{q+2} \sum_{i>0} (i-(q+2))(i-(q+m)) n_i \leq qm^2 - (2q-1)m-q \left( \frac 52 q - 1 \right).
 \]
 We can rewrite this as
\[
 \frac 13 (m-5) qm + m + q \left( \frac 52 q - \frac {19} 3 \right) < 0.
\]
But since $q \geq 16$, we have $m > \sqrt{2 \cdot 16}$, hence $m \geq 8$, and all terms on the left hand side are positive, so we obtain a contradiction.
\end{proof}

\end{document}